\definecolor{ududff}{rgb}{0,0,0}
\definecolor{uuuuuu}{rgb}{0,0,0}
\definecolor{ffffff}{rgb}{256,256,256}
\definecolor{red}{rgb}{120,0,0}
\DeclareFontFamily{U}{tipa}{}
\DeclareFontShape{U}{tipa}{m}{n}{<->tipa10}{}
\newcommand{\arc@char}{{\usefont{U}{tipa}{m}{n}\symbol{62}}}%
\newcommand{\arc}[1]{\mathpalette\arc@arc{#1}}
\newcommand{\arc@arc}[2]{%
  \sbox0{$\m@th#1#2$}%
  \vbox{
    \hbox{\resizebox{\wd0}{\height}{\arc@char}}
    \nointerlineskip
    \box0
  }%
}
\newtheorem{theorem}{Theorem}[section]
\newtheorem{lemma}[theorem]{Lemma}
\newtheorem{proposition}[theorem]{Proposition}
\newtheorem{question}[theorem]{Question}
\theoremstyle{definition}
\newtheorem{definition}[theorem]{Definition}
\newcommand{\R}{\mathbb R}
\author{Michael N. Manta}
\title{Triangle colorings require at least seven colors}
\begin{document}
\date{}
\maketitle
\begin{abstract}
We show that if a coloring of the plane has the properties that any two points at distance one are colored differently and the plane is partitioned into uniformly colored triangles under certain conditions, then it requires at least seven colors. This is also true for a coloring using uniformly colored polygons if it has a point bordering at least four polygons.
\end{abstract}

\section{Introduction}

The chromatic number of the plane is the minimum number of colors needed to color all points of $\R^2$ in such a way that any two points at distance one are colored differently. See Soifer \cite{Soi} for a detailed history of this problem. Although the problem was proposed by Nelson in the 1950s, the best lower bound has only recently been proven by De Grey \cite{Gre}, who showed that at least five colors are needed to color the plane. Exoo and Ismailescu \cite{Exo} provided an alternative proof of this result. On the other hand, the plane can be colored with seven colors by tiling the plane with hexagons or with squares (see Figure \ref{fig:tilings}), so the chromatic number of the plane is 5, 6, or 7.

It is natural to ask what the chromatic number is for colorings with certain restrictions. For instance, before De Grey's work, Falconer \cite{Fal} proved that five colors are needed for colorings that only use measurable sets. Woodall \cite{Woo} and Townsend \cite{Tow} proved that certain map-type colorings require at least six colors. Thomassen \cite{Tho} proved that seven colors are needed for certain restricted colorings. See Soifer \cite[Chapters 8, 9, and 24]{Soi} for more details about the chromatic number of restricted colorings.

Because the upper bound for the general problem follows from restricted colorings that only use polygons, it is natural to ask for the chromatic number of such colorings. Coulson \cite{Cou} proved that all polygon colorings require at least six colors, under the condition that all polygons are convex and have an area greater than some positive constant. Furthermore, Moustakis \cite{Mou} showed that at least seven colors are needed if the plane is tiled with congruent squares under certain conditions, which matches the square coloring in Figure \ref{fig:sqtile}.

\begin{figure}[ht]
        \centering
        \begin{subfigure}[b]{0.5\textwidth}
                \centering
                \begin{tikzpicture}[line cap=round,line join=round,>=triangle 45,x=1.0cm,y=1.0cm, scale=0.2]
\draw[line width=.9pt] (-1.25,2.165063509461097) -- (-2.5,0.) -- (-1.25,-2.1650635094610973) -- (1.25,-2.165063509461098) -- (2.5,0.) -- (1.25,2.1650635094610955) -- cycle;
\draw[line width=.9pt] (1.25,2.1650635094610955) -- (2.5,0.) -- (5.,0.) -- (6.25,2.1650635094610924) -- (5.,4.330127018922189) -- (2.5,4.330127018922193) -- cycle;
\draw[line width=.9pt] (-1.25,2.165063509461097) -- (1.25,2.1650635094610955) -- (2.5,4.330127018922195) -- (1.25,6.495190528383295) -- (-1.25,6.495190528383297) -- (-2.5,4.330127018922199) -- cycle;
\draw[line width=.9pt] (-1.25,2.165063509461097) -- (-2.5,4.330127018922199) -- (-5.,4.330127018922201) -- (-6.25,2.1650635094611) -- (-5.,0.) -- (-2.5,0.) -- cycle;
\draw[line width=.9pt] (-2.5,0.) -- (-5.,0.) -- (-6.25,-2.1650635094611097) -- (-5.,-4.330127018922215) -- (-2.5,-4.330127018922212) -- (-1.25,-2.1650635094611066) -- cycle;
\draw[line width=.9pt] (-1.25,-2.1650635094610973) -- (-2.5,-4.330127018922212) -- (-1.25,-6.495190528383315) -- (1.25,-6.4951905283833025) -- (2.5,-4.330127018922188) -- (1.25,-2.1650635094610853) -- cycle;
\draw[line width=.9pt] (1.25,-2.165063509461098) -- (2.5,-4.330127018922188) -- (5.,-4.330127018922157) -- (6.25,-2.1650635094610364) -- (5.,0.) -- (2.5,0.) -- cycle;

\draw[line width=.9pt] (5,4.330127018922212) -- (6.25,6.495190528383295);
\draw[line width=.9pt] (6.25,2.165063509461098) -- (7.5,2.165063509461098);
\draw[line width=.9pt] (6.25,-2.165063509461098) -- (7.5,-2.165063509461098);
\draw[line width=.9pt] (5,-4.330127018922212) -- (6.25,-6.495190528383295);

\draw[line width=.9pt] (-5,4.330127018922212) -- (-6.25,6.495190528383295);
\draw[line width=.9pt] (-6.25,2.165063509461098) -- (-7.5,2.165063509461098);
\draw[line width=.9pt] (-6.25,-2.165063509461098) -- (-7.5,-2.165063509461098);
\draw[line width=.9pt] (-5,-4.330127018922212) -- (-6.25,-6.495190528383295);

\draw (-1.15,1.4) node[anchor=north west] {1};
\draw (2.55,3.625) node[anchor=north west] {2};
\draw (2.55,-0.65) node[anchor=north west] {3};
\draw (-1.25,-2.8) node[anchor=north west] {4};
\draw (-4.95,-0.65) node[anchor=north west] {5};
\draw (-4.95,3.625) node[anchor=north west] {6};
\draw (-1.2,5.75) node[anchor=north west] {7};

\draw (6.25,5.75) node[anchor=north west] {6};
\draw (6.25,1.4) node[anchor=north west] {5};
\draw (6.25,-2.8) node[anchor=north west] {7};

\draw (-8.65,5.75) node[anchor=north west] {4};
\draw (-8.65,1.4) node[anchor=north west] {2};
\draw (-8.65,-2.8) node[anchor=north west] {3};

\draw (-4.95,-4.35) node[anchor=north west] {7};

\draw (2.45,-4.35) node[anchor=north west] {6};

\draw (-4.95,7.025) node[anchor=north west] {3};

\draw (2.45,7.025) node[anchor=north west] {4};

\end{tikzpicture}
                \caption{Hexagon coloring}
		\label{fig:hextile}
        \end{subfigure}%
                \begin{subfigure}[b]{0.5\textwidth}
                \centering
                
                \begin{tikzpicture}[line cap=round,line join=round,>=triangle 45,x=1.0cm,y=1.0cm, scale=0.17]
\draw[line width=.75pt] (-5.,0.) -- (0.,0.) -- (0.,5.) -- (-5.,5.) -- cycle;
\draw[line width=.75pt] (0.,0.) -- (5.,0.) -- (5.,5.) -- (0.,5.) -- cycle;
\draw[line width=.75pt] (-10.,0.) -- (-5.,0.) -- (-5.,5.) -- (-10.,5.) -- cycle;
\draw[line width=.75pt] (-15.,0.) -- (-10.,0.) -- (-10.,5.) -- (-15.,5.) -- cycle;
\draw[line width=.75pt] (5.,0.) -- (10.,0.) -- (10.,5.) -- (5.,5.) -- cycle;
\draw[line width=.75pt] (-20.,0.) -- (-15.,0.) -- (-15.,5.) -- (-20.,5.) -- cycle;
\draw[line width=.75pt] (-25.,0.) -- (-20.,0.) -- (-20.,5.) -- (-25.,5.) -- cycle;
\draw[line width=.75pt] (-22.5,0.) -- (-22.5,-5.) -- (-17.5,-5.) -- (-17.5,0.) -- cycle;
\draw[line width=.75pt] (-17.5,0.) -- (-17.5,-5.) -- (-12.5,-5.) -- (-12.5,0.) -- cycle;
\draw[line width=.75pt] (-12.5,0.) -- (-12.5,-5.) -- (-7.5,-5.) -- (-7.5,0.) -- cycle;
\draw[line width=.75pt] (-7.5,0.) -- (-7.5,-5.) -- (-2.5,-5.) -- (-2.5,0.) -- cycle;
\draw[line width=.75pt] (-2.5,0.) -- (-2.5,-5.) -- (2.5,-5.) -- (2.5,0.) -- cycle;
\draw[line width=.75pt] (2.5,0.) -- (2.5,-5.) -- (7.5,-5.) -- (7.5,0.) -- cycle;
\draw[line width=.75pt] (7.5,0.) -- (7.5,-5.) -- (12.5,-5.) -- (12.5,0.) -- cycle;
\draw[line width=.75pt] (-20.,-5.) -- (-25.,-5.) -- (-25.,-10.) -- (-20.,-10.) -- cycle;
\draw[line width=.75pt] (-15.,-5.) -- (-20.,-5.) -- (-20.,-10.) -- (-15.,-10.) -- cycle;
\draw[line width=.75pt] (-10.,-5.) -- (-15.,-5.) -- (-15.,-10.) -- (-10.,-10.) -- cycle;
\draw[line width=.75pt] (-5.,-5.) -- (-10.,-5.) -- (-10.,-10.) -- (-5.,-10.) -- cycle;
\draw[line width=.75pt] (0.,-5.) -- (-5.,-5.) -- (-5.,-10.) -- (0.,-10.) -- cycle;
\draw[line width=.75pt] (5.,-5.) -- (0.,-5.) -- (0.,-10.) -- (5.,-10.) -- cycle;
\draw[line width=.75pt] (10.,-5.) -- (5.,-5.) -- (5.,-10.) -- (10.,-10.) -- cycle;

\draw (-23.9,4.25) node[anchor=north west] {2};
\draw (-18.9,4.25) node[anchor=north west] {3};
\draw (-13.9,4.25) node[anchor=north west] {4};
\draw (-8.9,4.25) node[anchor=north west] {5};
\draw (-3.9,4.25) node[anchor=north west] {6};
\draw (1.1,4.25) node[anchor=north west] {7};
\draw (6.1,4.25) node[anchor=north west] {1};

\draw (-21.4,-0.75) node[anchor=north west] {5};
\draw (-16.4,-0.75) node[anchor=north west] {6};
\draw (-11.4,-0.75) node[anchor=north west] {7};
\draw (-6.4,-0.75) node[anchor=north west] {1};
\draw (-1.4,-0.75) node[anchor=north west] {2};
\draw (3.6,-0.75) node[anchor=north west] {3};
\draw (8.6,-0.75) node[anchor=north west] {4};

\draw (-23.9,-5.75) node[anchor=north west] {7};
\draw (-18.9,-5.75) node[anchor=north west] {1};
\draw (-13.9,-5.75) node[anchor=north west] {2};
\draw (-8.9,-5.75) node[anchor=north west] {3};
\draw (-3.9,-5.75) node[anchor=north west] {4};
\draw (1.1,-5.75) node[anchor=north west] {5};
\draw (6.1,-5.75) node[anchor=north west] {6};

\end{tikzpicture}
                \caption{Square coloring}
		\label{fig:sqtile}
        \end{subfigure}%
        \caption{In both colorings, polygons have unit length diagonals.}
        \label{fig:tilings}
\end{figure}
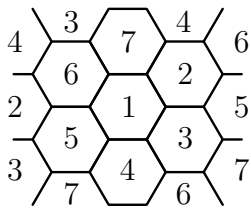
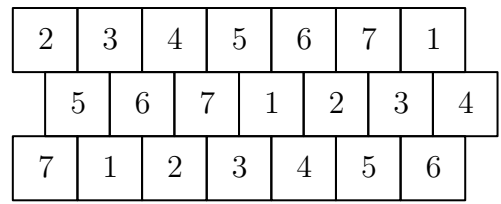

In this paper, we continue the study of restricted colorings that use polygons. In particular, we look at colorings that use triangles (see Figure \ref{fig:besttri} for an example), and we prove that at least seven colors are needed for any triangle coloring of the plane. We do this by showing that if any polygon coloring has a point that borders at least four polygons, then at least seven colors are needed. This, together with the observation that any triangle coloring has at least one point that borders at least four triangles, implies that triangle colorings require at least seven colors.

\section{Definitions and theorems}

We begin by defining the colorings using polygons that we study.

\begin{definition}
A \textit{polygon coloring} is a mapping from $\R^2$ to a finite set of colors, such that any two points a unit distance apart are colored differently, and the plane is partitioned into \textit{regions}, \textit{borders}, and \textit{vertices} with the following properties. Vertices are points, borders are open line segments whose endpoints are vertices, and regions are uniformly colored open polygons enclosed by borders and vertices. Any regions enclosed by the same vertex or border are colored differently. We only call a point a vertex if there exists a pair of borders it connects to that forms an angle other than $\pi$. A coloring is locally finite in the sense that any disk intersects finitely many regions, borders, and vertices.
\end{definition} 

Note that all borders and vertices may be colored arbitrarily, meaning that they may be colored independently of the regions that they border. The arguments in this paper are not affected by how the border points and vertices are colored. Furthermore, regions do not have to be strictly convex. 

Regions can \textit{share} borders or vertices, and borders can share vertices, even though as open sets they do not contain them. The \textit{degree} of a vertex is the number of borders that are connected to it. We say that a color \textit{occurs} at a border point or vertex $P$ if a region bordering $P$ contains that color. If $n$ colors occur at a vertex, then we call it an \textit{$n$-colored vertex}. Since in our definition regions that share a vertex must be colored differently, if a vertex has degree $n$ then it is $n$-colored. 

We now state the first of our two main theorems, which applies to all polygon colorings.

\begin{theorem}\label{thm:genthm}
If a polygon coloring of the plane contains a vertex of degree at least $4$, then at least $7$ colors are required.
\end{theorem}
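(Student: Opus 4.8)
The plan is to argue by contradiction: suppose a polygon coloring of the plane uses at most $6$ colors and has a vertex $P$ of degree at least $4$, and derive a contradiction. Throughout, the only two facts about the coloring that get used are the defining ones — points at distance $1$ differ, and each region is a single color — so the strategy is to manufacture from the local picture at $P$ a finite arrangement of points whose unit-distance edges together with a few ``same region, hence same color'' identifications admit no proper $6$-coloring.

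First I would fix notation for the local picture at $P$. Since $P$ has degree at least $4$, at least four borders emanate from it; passing to a small punctured disk around $P$, these borders cut the disk into $k\ge 4$ open sectors, each contained in a single region, and in cyclic order the regions $R_1,\dots,R_k$ carry $k\ge 4$ pairwise distinct colors because regions sharing the vertex $P$ must differ — say $R_1,R_2,R_3,R_4$ are colored $1,2,3,4$. I would also record two elementary consequences of the sector angles $\alpha_1,\dots,\alpha_k$ summing to $2\pi$: some sector has angular width at most $\pi/2$, and some two cyclically adjacent sectors have combined width at most $\pi$; which of these is invoked will be dictated by the gadget below.

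The core of the argument is a unit-distance propagation anchored at $P$. The basic mechanism is that a uniformly colored region of color $c$ forbids $c$ at every point lying at distance exactly $1$ from a point of that region; applied to the four sectors at $P$, which concentrate colors $1,2,3,4$ arbitrarily close to the single point $P$, this controls the colors available on the unit circle $C_P$ centered at $P$. Concretely, a point $Y\in C_P$ can be joined by a unit segment to points of the two sectors straddling the line through $P$ perpendicular to $PY$ (take the witnessing points close enough to $P$ that the slight curvature of the circle of radius $1$ about $Y$ does not matter), so $Y$ avoids those two colors among $\{1,2,3,4\}$, as well as whatever color is assigned to $P$ itself. As the direction of $Y$ turns, the forbidden pair jumps only as the perpendicular crosses a border direction, so $C_P$ splits into at most eight arcs each carrying a restricted palette; feeding in the constraint that points $60^\circ$ apart on $C_P$ differ already pins the coloring down severely. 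Iterating the same reservoir argument at well-chosen auxiliary points at distances $1$ and $\sqrt3$ from $P$ — building a Moser-spindle-type gadget whose vertices are forced to lie in prescribed regions by the local structure — then produces a configuration that no $6$-coloring can handle. I would organize this by first treating the ``generic'' case in which all sectors at $P$ are reasonably fat and then reducing the remaining cases to it.

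The main obstacle is geometric robustness. The angles at $P$ are entirely arbitrary and the regions $R_i$ may be arbitrarily small, so the auxiliary points cannot be placed by a fixed recipe: each must be shown to land in its intended region for every admissible angle configuration, and the degenerate situations — very thin sectors, a reflex sector of angular width exceeding $\pi$, or borders nearly aligned — must be isolated and dealt with by hand, possibly switching to a different gadget or exploiting the ``adjacent sectors of total width at most $\pi$'' observation. I expect essentially all of the case analysis to live here; once a workable configuration is secured, the concluding step that $6$ colors do not suffice is a short combinatorial verification.
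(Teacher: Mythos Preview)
Your proposal is a sketch rather than a proof, and its central mechanism is too weak to close the argument. You observe that a point $Y$ on the unit circle $C_P$ is at distance $1$ from the two sectors at $P$ straddling the perpendicular to $PY$, so $Y$ avoids those two of the colors $\{1,2,3,4\}$ (and the color of $P$ itself). That is correct but insufficient: with six colors each point of $C_P$ still has three colors available, and neither the inscribed hexagon nor a Moser-type gadget yields a contradiction from that alone --- the $6$-cycle is list-colorable from lists of size three, and the Moser spindle has chromatic number $4$, not $7$. Everything beyond this point in your plan is deferred to an unconstructed gadget (``Moser-spindle-type'', with vertices ``forced to lie in prescribed regions'') and an unspecified case analysis on the sector angles, so there is no argument to evaluate.

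The missing idea is that the constraint on $C_P$ becomes far stronger once you use the \emph{region} containing $Y$ rather than the single point $Y$. If $Y$ lies in the interior of a region $S$, a small neighborhood of $Y$ is contained in $S$; the set of points at distance exactly $1$ from that neighborhood is a thin annulus around $Y$ which contains $P$ and therefore meets \emph{every} sector at $P$. Hence the color of $S$ differs from all four colors at $P$, not just two. This is the engine of the paper's argument: any point of $C_P$ not on a border that crosses $C_P$ (a ``crossing'') already sits in a region of a fifth color, so a crossing is flanked on $C_P$ by regions carrying a fifth and a sixth color. One then takes the inscribed regular hexagon on $C_P$, reduces to the case where all six of its vertices are crossings, and classifies each crossing by how the four borders at $P$ sit relative to the unit circle centered there --- your tangent-line count resurfaces exactly here, but as a trichotomy of point types rather than a palette bound. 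A short pigeonhole (at most four arcs of one of the types, six hexagon vertices) then puts two adjacent hexagon vertices on the same arc, and a local argument at that pair forces a seventh color. Your plan never reaches the region-level exclusion, and without it the approach does not get off the ground.
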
 

We define a \textit{triangle coloring} of the plane as a polygon coloring in which all the regions are triangles. We now state our second main theorem that applies to all triangle colorings, which is a consequence of Theorem \ref{thm:genthm}.

\begin{theorem}\label{thm:trithm}
Any triangle coloring of the plane requires at least $7$ colors.
\end{theorem}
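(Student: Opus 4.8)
The plan is to deduce Theorem~\ref{thm:trithm} from Theorem~\ref{thm:genthm} by showing that \emph{every} triangle coloring of the plane contains a vertex of degree at least $4$; Theorem~\ref{thm:genthm} then finishes the proof at once. So I would fix a triangle coloring and suppose, toward a contradiction, that every vertex has degree at most $3$. First I would rule out degrees $1$ and $2$: the regions meeting a vertex cut a small disk around it into sectors whose angles sum to $2\pi$, and since every region is a triangle (hence convex) each sector has angle at most $\pi$; two sectors cannot sum to $2\pi$ unless both equal $\pi$, but then the two bounding borders are collinear and, by the definition of a vertex, the point is not a vertex. Hence under our assumption every vertex has degree exactly $3$, and the same angle bound shows that at most one of the three sectors at any vertex can have angle $\pi$ (two such sectors would force the third to have angle $0$). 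I will call a vertex a \emph{$T$-vertex} if one of its sectors has angle $\pi$ --- equivalently, some triangle has that vertex in the relative interior of one of its sides --- and ordinary otherwise.

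The heart of the matter is a Gauss--Bonnet/Euler-characteristic count. Writing $V$, $B$, $F$ for the numbers of vertices, borders, and triangular regions (in a large disk, then passing to the limit; see below), I would use three relations. (i) Every border has two endpoints and every vertex has degree $3$, so $2B = 3V$. (ii) Euler's formula, which in the limit reads $V - B + F = 0$, whence $F = V/2$. (iii) An angle identity from summing all sector angles two ways: the total is $2\pi V$, and it also equals the combined angle sum of all triangles, namely $\pi F$, plus $\pi$ times the number of $\pi$-sectors; therefore the number of $\pi$-sectors is $2V - F = 3V/2$. But each $\pi$-sector sits at a distinct $T$-vertex and each $T$-vertex carries exactly one $\pi$-sector, so the number of $\pi$-sectors is at most $V$. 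Since $3V/2 > V$, this is a contradiction, so some vertex must have degree at least $4$.

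With such a vertex in hand, Theorem~\ref{thm:genthm} applies verbatim and shows the coloring uses at least $7$ colors, which is Theorem~\ref{thm:trithm}.

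The step I expect to be the real obstacle is making the counting argument legitimate on the non-compact plane: a triangle coloring need not be periodic, and nothing in the hypotheses forbids triangles that are very large or very small, so one must check that the boundary contributions to Euler's formula and to the angle count are genuinely of lower order than the bulk. The cleanest way I can see is to run the bookkeeping on $X_R$, the union of all \emph{closed} triangles meeting the disk $D_R$ --- a finite polygonal complex whose Euler characteristic is easily controlled --- and then use local finiteness together with the fact that the tiling lives in the Euclidean plane (so angle-sum and area constraints preclude runaway growth near the boundary) to conclude that the number of cells on $\partial X_R$ is $o(\operatorname{area} X_R)$ as $R \to \infty$. Once the boundary is controlled, the identities (i)--(iii) and the final inequality are routine.
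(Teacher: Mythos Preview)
Your reduction to Theorem~\ref{thm:genthm} is exactly what the paper does: show that every triangle coloring has a vertex of degree at least $4$, then invoke Theorem~\ref{thm:genthm}. Your preliminary step ruling out degrees $1$ and $2$ is also correct and essentially implicit in the paper. From there, however, the two arguments diverge.

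The paper proves the existence of a degree-$4$ vertex by a purely \emph{local} argument. It isolates a single triangle $\triangle ABC$, looks at the borderlines leaving $A$, $B$, $C$, and repeatedly applies a short lemma (Lemma~\ref{lem:obtuselem}) asserting that whenever two non-intersecting rays emanate from the endpoints of a borderline $BC$ making angles $<\pi$ with it, a degree-$4$ vertex is forced somewhere on $BC$. A finite case analysis on how the outgoing borderlines at $A,B,C$ can be collinear with the sides of $\triangle ABC$ then traps a degree-$4$ vertex within two combinatorial steps of the original triangle. No limiting procedure, no global counting.

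Your Euler--angle-count is different and, as you note, the heuristic identities $2B=3V$, $F=V/2$, and $\#(\text{$\pi$-sectors})=3V/2>V$ are all correct on a closed surface; the only issue is the passage to the plane. Here your proposed control is not quite the right statement and probably not attainable as written. You want the boundary correction to be $o(V)$, not $o(\operatorname{area}X_R)$: nothing in the definition of a polygon coloring bounds triangle areas below, so the number of cells need not be comparable to area, and the annulus bound you invoke only controls the \emph{area} occupied by boundary triangles, not their \emph{count}. Concretely, carrying the bookkeeping through a disk-like $X_R$ yields $V_i\le V_b+O(1)$, so what you actually need is a subcomplex with strictly more interior than boundary vertices; for $3$-regular planar complexes with unbounded face-degrees (which is what large $k_T$ gives you combinatorially) there is no general isoperimetric reason this must occur, and local finiteness alone does not supply one. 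The gap is therefore real, and the remedy you sketch does not close it. If you want to pursue the counting route, you will need a genuinely Euclidean ingredient beyond local finiteness---for instance, leveraging that triangle diameters are below $1$ together with a careful choice of $R$ along which the annulus contains few vertices---but this is substantially more delicate than the paper's two-page local argument, which sidesteps the issue entirely.
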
 

\section{Definitions and lemmas}

In this section, we introduce some definitions and lemmas that we use in our proof. These all refer to a polygon coloring with a fixed vertex $O$ and the circle $C_O$ of unit radius centered at $O$. 

\subsection{Crossings}

The proof relies on considering certain border points and vertices on $C_O$ which we call crossings.

\begin{definition}
A \textit{crossing} is either a point on $C_O$ that lies on a border not tangent to $C_O$ or a vertex on $C_O$ connected to at least one border that has points inside $C_O$ and at least one border that has points outside $C_O$.
\end{definition} 

\begin{figure}[ht]
        \centering
        \begin{subfigure}[b]{0.5\textwidth}
                \centering
                 \begin{tikzpicture}[line cap=round,line join=round,>=triangle 45,x=1.0cm,y=1.0cm, scale=0.5]
\draw [line width=1.pt] (0.,0.) circle (5.cm);
\draw [line width=1.pt] (-1.1255025201300173,0.8193951095066375)-- (0.,0.);
\draw [line width=1.pt] (0.,0.)-- (0.8530955439712657,0.9136140649400318);
\draw [line width=1.pt] (0.,0.)-- (-0.9056582907854304,-0.9707650437278539);
\draw [line width=1.pt] (0.,0.)-- (1.0729397733158528,-1.0963903176390464);

\draw [fill=black] (2.5,4.330127018922193) circle (3pt);
\draw [line width=1.pt] (1.6226830776696684,3.4397254141432962)-- (3.377316922330328,5.220528623701084);

\begin{scriptsize}
\draw [fill=uuuuuu] (0.,0.) circle (2.0pt);
\draw [fill=uuuuuu] (-1.1255025201300173,0.8193951095066375) circle (2.0pt);
\draw [fill=ududff] (0.8530955439712657,0.9136140649400318) circle (2pt);
\draw [fill=ududff] (-0.9056582907854304,-0.9707650437278539) circle (2pt);
\draw [fill=ududff] (1.0729397733158528,-1.0963903176390464) circle (2pt);

\draw [fill=black] (-2.5,4.330127018922193) circle (3pt);
\draw [line width=1.pt] (-1.3946013658158076,3.7465228212473294)-- (-2.5,4.330127018922193);
\draw [line width=1.pt] (-2.5,4.330127018922193)-- (-2.5333825456094092,3.0805728561710795);
\draw [line width=1.pt] (-2.5,4.330127018922193)-- (-3.0572006030941794,5.449067360455862);

\draw[color=uuuuuu] (-2.3,4.930127018922193) node {$P_1$};

\draw[color=uuuuuu] (2.4,4.930127018922193) node {$P_2$};

\draw[color=uuuuuu] (0, 0.5) node {$O$};
\end{scriptsize}
\end{tikzpicture}
                \caption{Points $P_1$ and $P_2$ are crossings}
		\label{fig:crossings}
        \end{subfigure}%
                \begin{subfigure}[b]{0.5\textwidth}
                \centering
                \begin{tikzpicture}[line cap=round,line join=round,>=triangle 45,x=1.0cm,y=1.0cm, scale=0.5]
\draw [line width=1.pt] (0.,0.) circle (5.cm);
\draw [line width=1.pt] (-1.1255025201300173,0.8193951095066375)-- (0.,0.);
\draw [line width=1.pt] (0.,0.)-- (0.8530955439712657,0.9136140649400318);
\draw [line width=1.pt] (0.,0.)-- (-0.9056582907854304,-0.9707650437278539);
\draw [line width=1.pt] (0.,0.)-- (1.0729397733158528,-1.0963903176390464);

\begin{scriptsize}
\draw [fill=black] (0.,0.) circle (2.0pt);
\draw [fill=uuuuuu] (-1.1255025201300173,0.8193951095066375) circle (2.0pt);
\draw [fill=ududff] (0.8530955439712657,0.9136140649400318) circle (2pt);
\draw [fill=ududff] (-0.9056582907854304,-0.9707650437278539) circle (2pt);
\draw [fill=ududff] (1.0729397733158528,-1.0963903176390464) circle (2pt);

\draw [fill=black] (-2.5,4.330127018922193) circle (3pt);
\draw [line width=1.pt] (-2.5,4.330127018922193)-- (-0.7679491924311237,5.330127018922193);
\draw [line width=1.pt] (-2.5,4.330127018922193)-- (-4.232050807568877,3.330127018922193);

\draw [fill=black] (2.5,4.330127018922193) circle (3pt);
\draw [line width=1.pt] (2.5,4.330127018922193)-- (0.7679491924311237,5.330127018922193);
\draw [line width=1.pt] (2.5,4.330127018922193)-- (4.232050807568877,3.330127018922193);
\draw [line width=1.pt] (2.5,4.330127018922193)-- (3.6963144919952007,4.532884529041009);

\draw[color=uuuuuu] (0, 0.5) node {$O$};

\draw[color=uuuuuu] (-2.6,4.830127018922193) node {$P_3$};

\draw[color=uuuuuu] (2.5,4.830127018922193) node {$P_4$};

\end{scriptsize}
\end{tikzpicture}
        \caption{Points $P_3$ and $P_4$ are pseudo-crossings}
		\label{fig:noncrossings}
        \end{subfigure}%
        \caption{Examples of crossings and pseudo-crossings on $C_O$}
        \label{fig:specialpoints}
\end{figure}
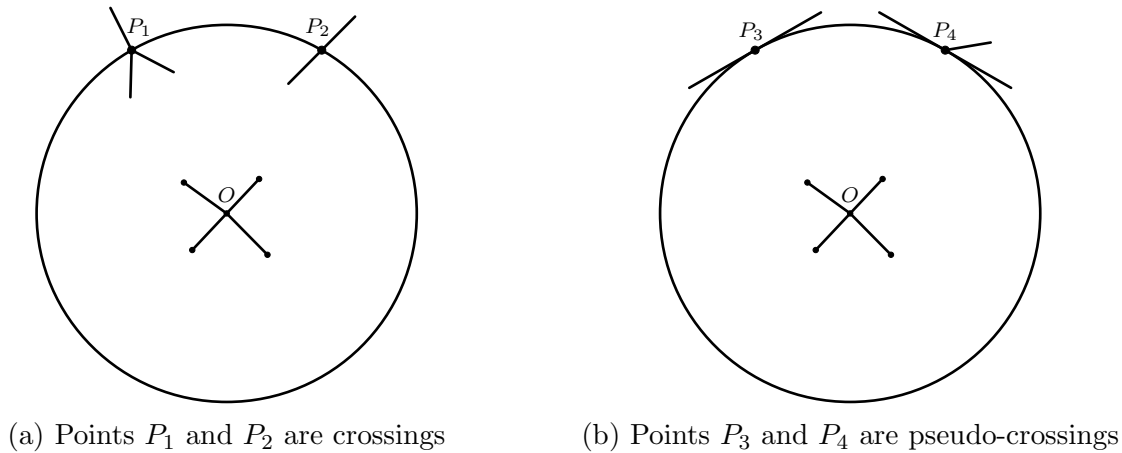

Note that a point on $C_O$ which lies on a border that is tangent to $C_O$ is not a crossing. Also, a vertex on $C_O$ connected to borders that do not have any points inside $C_O$ (see Figure \ref{fig:noncrossings}) or borders that do not have any points outside $C_O$ is not a crossing. We call these points \textit{pseudo-crossings}.

We now record the obvious fact that $C_O$ must contain at least one crossing (in fact, it is not hard to show that there must be at least six).

\begin{lemma}\label{lem:arclem}
The circle $C_O$ must contain at least one crossing.
\end{lemma}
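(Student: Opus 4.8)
The plan is to argue by contradiction: assume that $C_O$ contains no crossing, and deduce that $C_O$ lies inside a single region $R$. This is impossible, because $R$ is uniformly colored while $C_O$ contains pairs of points at mutual distance one, for instance any two of its points subtending an angle of $\pi/3$ at $O$. To set this up, note that by local finiteness only finitely many borders and vertices meet the compact set $C_O$, and a border, being a segment, meets $C_O$ in at most two points; hence the set $J\subseteq C_O$ of points lying on a border or equal to a vertex is finite, and $C_O\setminus J$ is a finite union of open arcs. Since the regions are open, pairwise disjoint, and together exhaust the complement of all borders and vertices, each such arc — being connected and disjoint from all borders and vertices — lies in a single region. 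It therefore suffices to prove the local statement: at every $X\in J$, the two arcs of $C_O$ immediately adjacent to $X$ lie in the same region. Chaining this around the circle puts all of $C_O\setminus J$ into one region $R$; then, picking a unit vector $u$ such that neither $O+u$ nor the rotation of $O+u$ by $\pi/3$ about $O$ lies in the finite set $J$, we get two points of $R$ at distance one, contradicting that $R$ is monochromatic.

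For the local statement, we use that $X$ is not a crossing. If $X$ is not a vertex, it lies in the interior of a unique border, which must be tangent to $C_O$ at $X$. If $X$ is a vertex, it is a pseudo-crossing, so either every border at $X$ avoids the open disk $B_O$ bounded by $C_O$, or every border at $X$ avoids the exterior $\R^2\setminus\overline{B_O}$. In the first two situations every border through $X$ avoids $B_O$: choosing a neighborhood $U$ of $X$ small enough to meet no border or vertex except those through $X$, the set $B_O\cap U$ is connected and meets no border or vertex, so it lies in a single region $R^*$. Both arcs of $C_O$ at $X$ lie in $\overline{B_O\cap U}\subseteq\overline{R^*}$; and each arc, being connected and disjoint from all borders and vertices, lies in some region $R'$, with $R'\cap\overline{R^*}\neq\emptyset$, which forces $R'=R^*$ since $\partial R^*$ consists of borders and vertices and is disjoint from every region. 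Hence both arcs lie in $R^*$. The remaining case, in which every border at $X$ avoids the exterior of $\overline{B_O}$, is the mirror image, carried out with the connected set $U\setminus\overline{B_O}$ in place of $B_O\cap U$.

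The step I expect to be the genuine obstacle is this last case, an ``inner'' pseudo-crossing vertex: there borders do run into $B_O$, and one must rule out their separating the two arcs of $C_O$ near $X$. The resolution is that such borders, lying in the closed disk, never meet the exterior of $C_O$, so the region filling the area just outside $C_O$ near $X$ is undisturbed and supplies both arcs. Making the underlying point-set topology precise — that a connected subset of the plane missing all borders and vertices lies in a single region, and that $\overline{R}\cap R'=\emptyset$ for distinct regions $R,R'$ — is essentially the only real work; everything else is bookkeeping, and the argument never actually uses that $O$ is a vertex.
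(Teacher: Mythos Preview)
Your proof is correct and follows essentially the same route as the paper's: assume no crossings, deduce that all but finitely many points of $C_O$ lie in a single region, and then exhibit a unit-distance pair inside that region. The paper's version is a two-sentence sketch that simply asserts the key implication, whereas you have supplied the point-set topology (the local analysis at each pseudo-crossing showing the adjacent arcs share a region) that the paper leaves implicit.
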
 
\begin{proof}
Assume that there are no crossings on $C_O$. This implies that all but finitely many points on $C_O$ lie within the same region, with pseudo-crossings being the only possible exceptions. Since the polygon coloring is locally finite, there must be two points on $C_O$ at distance one that are not exceptions. Thus, these two points are in the same region and must be colored the same. However, this is a contradiction since points at distance one from each other must be colored differently. 
\end{proof}

Now we state the following lemma about points that are not crossings on $C_O$.

\begin{lemma}\label{lem:neighlem}
If a point $P$ on $C_O$ is not a crossing, then there is a region that $P$ lies in or borders that cannot be colored any of the colors that occur at $O$.
\end{lemma}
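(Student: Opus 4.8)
The plan is to reduce to four cases describing the local picture at $P$ and, in each, to exhibit a single region $R'$ that $P$ lies in or borders, together with---for every color occurring at $O$---a pair of points at unit distance showing that $R'$ cannot carry that color. Place $O$ at the origin and $P=(1,0)$, write $\mathbf e=P-O=(1,0)$, and let $R_1,\dots,R_d$ be the regions filling the angular sectors that the borders at $O$ cut out, with colors $c_1,\dots,c_d$; every color occurring at $O$ is one of these $c_i$, so it suffices to make the color of $R'$ different from each $c_i$. Since $P$ is not a crossing, exactly one of the following holds. \textbf{(A)} $P$ lies in the interior of a region; take $R'$ to be that region. \textbf{(B)} $P$ lies on a border but is not a vertex; then (as $P$ is not a crossing) that border is tangent to $C_O$ at $P$, and being a line segment it lies on the tangent line to $C_O$ at $P$, with $C_O$ lying near $P$ on the $O$-side of this line; take $R'$ to be the region adjacent to that border on the $O$-side. \textbf{(C)} $P$ is a vertex all of whose borders head into the interior of $C_O$ near $P$; take $R'$ to be the region whose angular sector at $P$ contains the outward direction $\mathbf e$. \textbf{(D)} $P$ is a vertex all of whose borders head into the exterior of $C_O$ near $P$; take $R'$ to be the region whose sector at $P$ contains the inward direction $-\mathbf e$. (A non-crossing vertex falls under (C) or (D): a short segment leaving $P$ in direction $\psi$ lies inside $C_O$ precisely when $\cos\psi<0$, and if some border headed inside while another headed outside then $P$ would be a crossing; so in (C) every border direction lies in $(\pi/2,3\pi/2)$ and in (D) in $[-\pi/2,\pi/2]$.) A routine argument using local finiteness now yields, in every case, an $\varepsilon_0>0$ and an open arc $J$ of directions---the angular range of $R'$ at $P$, and all directions in case (A)---such that every point $P+\mathbf w$ with $0<|\mathbf w|<\varepsilon_0$ and $\mathbf w$ pointing in a direction of $J$ lies in $R'$. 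The feature I will use is that in each case $J$ contains an interval $(\pi/2,\pi/2+\delta)$ or $(3\pi/2-\delta,3\pi/2)$ for some $\delta>0$, i.e.\ $J$ reaches one of the two directions tangent to $C_O$ at $P$.

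The geometric input is the following. If $q$ is near $O$ and $a$ is near $P$ with $|a-q|=1$, then setting $\mathbf z=q-O$ and $\mathbf d=(a-q)-(P-O)$ we have $a=P+\mathbf z+\mathbf d$ and $|\mathbf e+\mathbf d|=1$; conversely, for \emph{any} vector $\mathbf z$ and any $\mathbf d$ on the unit circle $\Gamma$ centered at $-\mathbf e$ (which passes through the origin), the points $q=O+\mathbf z$ and $a=P+\mathbf z+\mathbf d$ are at distance exactly $1$. The circle $\Gamma$ lies in the half-plane $\{\mathbf x:\mathbf x\cdot\mathbf e\le0\}$ and meets its boundary only at the origin; hence a nonzero $\mathbf d\in\Gamma$ always points in a direction in $(\pi/2,3\pi/2)$, and as $|\mathbf d|\to0$ along $\Gamma$ that direction tends to $\pi/2$ or to $3\pi/2$.

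Suppose, for contradiction, that $R'$ has color $c_i$, and pick a direction $\psi_q$ in the interior of the sector $\Sigma_i$ of $R_i$ at $O$. Choose a small nonzero $\mathbf d\in\Gamma$ whose direction lies in the interval $(\pi/2,\pi/2+\delta)$ or $(3\pi/2-\delta,3\pi/2)$ contained in $J$, small enough that $|\mathbf d|<\varepsilon_0/2$. Then choose $\rho>0$ so small that, with $\mathbf z$ the vector of length $\rho$ in direction $\psi_q$, the vector $\mathbf w=\mathbf z+\mathbf d$ still satisfies $0<|\mathbf w|<\varepsilon_0$ and still points in a direction of $J$---possible because $\mathbf w\to\mathbf d$ as $\rho\to0$ and $J$ is open---and so small that $q=O+\mathbf z$ lies in $R_i$. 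Then $a:=P+\mathbf w$ lies in $R'$ while $|a-q|=|\mathbf e+\mathbf d|=1$, so $a$ and $q$ are distinct points at unit distance both colored $c_i$, contradicting the definition of a polygon coloring. Hence the color of $R'$ is none of $c_1,\dots,c_d$, which proves the lemma.

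The substantive part of this argument is the case analysis leading to the description of $J$---in particular, verifying for cases (C) and (D) that the "back" region $R'$, which near $P$ lies essentially to one side of $C_O$, nevertheless has angular range reaching a tangent direction. That is exactly what allows $R'$ to be hit, using arbitrarily short vectors $\mathbf d\in\Gamma$, from \emph{every} sector at $O$ at once; a region whose angular range at $P$ were a closed subarc of $(\pi/2,3\pi/2)$ bounded away from $\pm\pi/2$ would fail this, and the point of the "$P$ is not a crossing" hypothesis is that such a region is never the one we are forced to choose.
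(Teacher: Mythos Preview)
Your proof is correct and considerably more explicit than the paper's. The paper argues the interior case (your case (A)) essentially as you do---a small neighborhood of $P$ contains, for each region $R_i$ at $O$, a point at unit distance from some point of $R_i$---and then disposes of every pseudo-crossing case in one line: pick a nearby point $P'$ on $C_O$ lying inside a region that borders $P$, and apply case (A) to $P'$. That reduction works because when $P$ is not a crossing, $C_O$ near $P$ stays entirely inside a single region bordering $P$.

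You instead handle cases (B), (C), (D) head-on by identifying the relevant region $R'$ and its angular range $J$ at $P$, then checking that $J$ always reaches one of the tangent directions $\pi/2$ or $3\pi/2$. The parametrization of unit-distance pairs via the displacement circle $\Gamma$ centered at $-\mathbf e$ makes the constraint transparent and lets you treat all sectors at $O$ with a single limiting argument. This buys a self-contained computation that never moves along $C_O$, at the price of a longer case analysis. The paper's reduction is slicker, but your route makes more visible \emph{why} the non-crossing hypothesis matters: it is precisely what forces the chosen $R'$ to have angular range not bounded away from $\pm\pi/2$, the point you highlight in your final paragraph.
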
 
\begin{proof}
If $P$ lies within a region, consider a small circular neighborhood of $P$. Within each of the regions bordering $O$, there is a point a unit distance away from a point in the neighborhood of $P$. Each of these points must be colored differently, so the region $P$ lies in cannot be colored any of the colors that occur at $O$. On the other hand, if $P$ does not lie within a region and is a pseudo-crossing, then we can apply the previous argument to a point on $C_O$ in a region that $P$ borders.
\end{proof}

\subsection{Point types and special arcs}
The following definitions and lemmas all refer to a polygon coloring with a fixed 4-colored vertex $O$ and the circle $C_O$ of unit radius centered at $O$.

We define an \textit{inside neighborhood} of a point $P$ on $C_O$ as an open semi-circular neighborhood centered at $P$ that lies on the side closer to $O$ of the tangent line to $C_O$ at $P$. Similarly, we define an \textit{outside neighborhood} of a point $P$ on $C_O$ as an open semi-circular neighborhood centered at $P$ that lies on the side away from $O$ of the tangent line to $C_O$ at $P$. We say that if any points within a neighborhood of a point cannot be colored any of $n$ colors, then those $n$ colors are \textit{excluded} in that neighborhood.

\begin{definition}
An \textit{inward point} is a point $P$ on $C_O$ that has an inside neighborhood in which all $4$ colors occurring at $O$ are excluded (see Figure \ref{fig:inpoint}). 
\end{definition}

\begin{definition}
An \textit{outward point} is a point $P$ on $C_O$ that has an outside neighborhood in which all $4$ colors occurring at $O$ are excluded (see Figure \ref{fig:outpoint}). 
\end{definition} 

\begin{definition}
An \textit{alternative point} is a point $P$ on $C_O$ that has an outside neighborhood  in which only $3$ of the $4$ colors occurring at $O$ are excluded (see Figure \ref{fig:altpoint}). 
\end{definition} 

We can determine whether a point $P$ is inward, outward, or alternative as follows. Take the tangent line at $O$ to the unit circle $C_P$ centered at $P$ and count the borders on each side of the tangent. Adding one to this number usually gives the number of excluded colors on the corresponding side of $P$. The only exception to this heuristic is when any borders lie on the tangent line (see Figure \ref{fig:collinearpair} for an example) or all borders lie on one side of the tangent line.

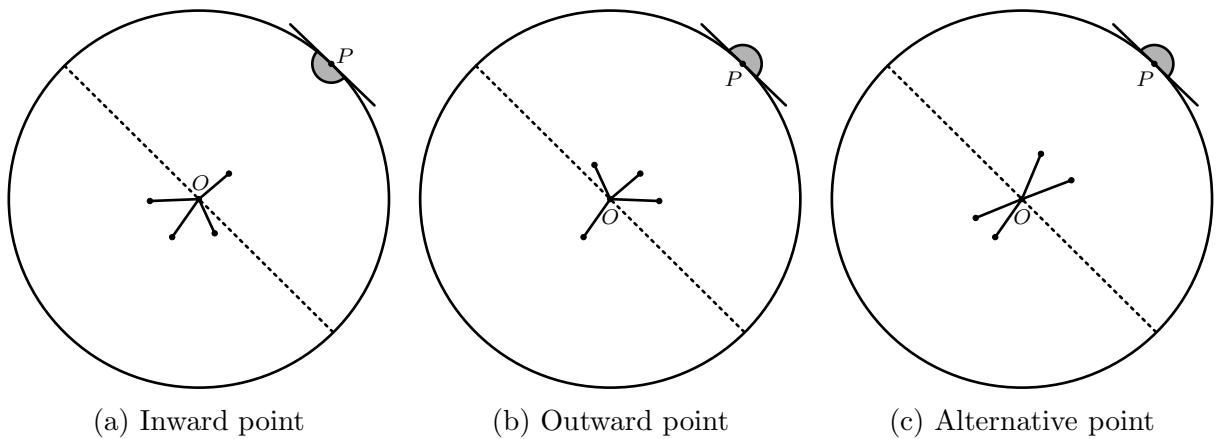
\begin{figure}[ht]
        \centering
        \begin{subfigure}[b]{0.33\textwidth}
                \centering

                \begin{tikzpicture}[line cap=round,line join=round,>=triangle 45,x=1.0cm,y=1.0cm,  scale=0.5]
\draw [line width=1pt] (0.,0.) circle (5.cm);
\draw [line width=1pt] (2.403815059142447,4.635067688461732)-- (4.618286161413787,2.4850613702588413);
\draw [shift={(3.482932991091283,3.5873636252222783)},line width=1pt,fill=black,fill opacity=0.3]  plot[domain=-0.7706289045528925:2.3709637490368998,variable=\t]({-0.5*cos(\t r)},{-0.5*sin(\t r)});

\draw [line width=1.pt, dotted] (-3.5355339059327373,3.5355339059327378)-- (3.5355339059327373,-3.5355339059327378);

\draw [line width=1.pt] (0.,0.)-- (0.7907474566524066,0.6769231762356426);
\draw [line width=1.pt] (0.,0.)-- (0.41556302464404626,-0.906696601244491);
\draw [line width=1.pt] (0.,0.)-- (-1.288589877504911,-0.05069266962571086);
\draw [line width=1.pt] (0.,0.)-- (-0.7027798059051065,-1.0080819404959127);

\begin{scriptsize}
\draw [fill=uuuuuu] (0.,0.) circle (2.0pt);
\draw [fill=uuuuuu] (3.482932991091283,3.5873636252222783) circle (2.0pt);
\draw [fill=ududff] (0.7907474566524066,0.6769231762356426) circle (2pt);
\draw [fill=ududff] (0.41556302464404626,-0.906696601244491) circle (2pt);
\draw [fill=ududff] (-1.288589877504911,-0.05069266962571086) circle (2pt);
\draw [fill=ududff] (-0.7027798059051065,-1.0080819404959127) circle (2pt);

\draw (3.4,4.2) node[anchor=north west] {$P$};
\draw (-0.4,0.85) node[anchor=north west] {$O$};
\end{scriptsize}
\end{tikzpicture}
                \caption{Inward point}
		\label{fig:inpoint}
        \end{subfigure}%
                \begin{subfigure}[b]{0.33\textwidth}
                \centering
               \begin{tikzpicture}[line cap=round,line join=round,>=triangle 45,x=1.0cm,y=1.0cm,  scale=0.5]
\draw [line width=1.pt] (0.,0.) circle (5.cm);
\draw [line width=1.pt] (2.403815059142447,4.635067688461732)-- (4.618286161413787,2.4850613702588413);
\draw [shift={(3.482932991091283,3.5873636252222783)},line width=1.pt,fill=black,fill opacity=0.3]  plot[domain=-0.7706289045528925:2.3709637490368998,variable=\t]({0.5*cos(\t r)},{0.5*sin(\t r)});

\draw [line width=1.pt, dotted] (-3.5355339059327373,3.5355339059327378)-- (3.5355339059327373,-3.5355339059327378);

\draw [line width=1.pt] (0.,0.)-- (0.7907474566524066,0.6769231762356426);
\draw [line width=1.pt] (0.,0.)-- (-0.41556302464404626,0.906696601244491);
\draw [line width=1.pt] (0.,0.)-- (1.288589877504911,-0.05069266962571086);
\draw [line width=1.pt] (0.,0.)-- (-0.7027798059051065,-1.0080819404959127);

\begin{scriptsize}
\draw [fill=uuuuuu] (0.,0.) circle (2.0pt);
\draw [fill=uuuuuu] (3.482932991091283,3.5873636252222783) circle (2.0pt);
\draw [fill=ududff] (0.7907474566524066,0.6769231762356426) circle (2pt);
\draw [fill=ududff] (-0.41556302464404626,0.906696601244491) circle (2pt);
\draw [fill=ududff] (1.288589877504911,-0.05069266962571086) circle (2pt);
\draw [fill=ududff] (-0.7027798059051065,-1.0080819404959127) circle (2pt);

\draw (2.8,3.6) node[anchor=north west] {$P$};
\draw (-0.44,-0.05) node[anchor=north west] {$O$};
\end{scriptsize}
\end{tikzpicture}
                \caption{Outward point}
		\label{fig:outpoint}
        \end{subfigure}%
        \begin{subfigure}[b]{0.33\textwidth}
                \centering
               \begin{tikzpicture}[line cap=round,line join=round,>=triangle 45,x=1.0cm,y=1.0cm,  scale=0.5]
\draw [line width=1.pt] (0.,0.) circle (5.cm);
\draw [line width=1.pt] (2.403815059142447,4.635067688461732)-- (4.618286161413787,2.4850613702588413);
\draw [shift={(3.482932991091283,3.5873636252222783)},line width=1.pt,fill=black,fill opacity=0.3]  plot[domain=-0.7706289045528925:2.3709637490368998,variable=\t]({0.5*cos(\t r)},{0.5*sin(\t r)});

\draw [line width=1.pt, dotted] (-3.5355339059327373,3.5355339059327378)-- (3.5355339059327373,-3.5355339059327378);

\draw [line width=1.pt] (0.,0.)-- (0.5,1.2);
\draw [line width=1.pt] (0.,0.)-- (-1.21556302464404626,-0.5);
\draw [line width=1.pt] (0.,0.)-- (1.3,0.5);
\draw [line width=1.pt] (0.,0.)-- (-0.7027798059051065,-1.0080819404959127);

\begin{scriptsize}
\draw [fill=uuuuuu] (0.,0.) circle (2.0pt);
\draw [fill=uuuuuu] (3.482932991091283,3.5873636252222783) circle (2.0pt);
\draw [fill=ududff] (-1.21556302464404626,-0.5) circle (2pt);
\draw [fill=ududff] (0.5,1.2) circle (2pt);
\draw [fill=ududff] (1.3,0.5) circle (2pt);
\draw [fill=ududff] (-0.7027798059051065,-1.0080819404959127) circle (2pt);

\draw (2.8,3.6) node[anchor=north west] {$P$};
\draw (-0.44,-0.05) node[anchor=north west] {$O$};
\end{scriptsize}
\end{tikzpicture}
                \caption{Alternative point}
		\label{fig:altpoint}
        \end{subfigure}%
         \caption{The shaded areas are the inside and outside neighborhoods of $P$.}
\end{figure}

With these definitions, we can show that these are the only kinds of points on $C_O$.


\begin{lemma}\label{lem:threetypes}
Any point on $C_O$ is an inward, outward, or alternative point.
\end{lemma}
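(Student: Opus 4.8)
\emph{Plan.} Fix a point $P$ on $C_O$ and let $\ell$ be the tangent line to $C_P$ at $O$, that is, the line through $O$ perpendicular to $OP$; write $H$ for the open side of $\ell$ containing $P$ and $H'$ for the other. Since $O$ is $4$-colored it has degree $4$, so near $O$ the four border segments at $O$ cut the plane into the four angular sectors of the four regions meeting $O$. The plan is to show that either an outside neighborhood of $P$ excludes at least three of the four colors occurring at $O$ -- so $P$ is outward or alternative -- or an inside neighborhood excludes all four, so $P$ is inward.

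The geometric input is an analysis of the unit circle $C_Q$ for $Q$ near $P$. Since $|OQ|\to 1$ as $Q\to P$, $C_Q$ passes arbitrarily close to $O$, and at its point nearest $O$ its tangent is parallel to $\ell$; placing $O$ at the origin with $P=(1,0)$ (so $\ell$ is the $y$-axis and $H=\{x>0\}$), the short arc of $C_Q$ near $O$ is, up to a small translation along the $y$-axis, of the form $x=\varepsilon+\tfrac12 y^2+\cdots$, where $\varepsilon$ has the same sign as $|OQ|-1$. Hence if $Q$ lies outside $C_O$ the arc stays in $H$ and, as $y$ ranges over a short interval, sweeps a large subinterval of the directions pointing into $H$; if $Q$ lies inside $C_O$ the arc bulges across $O$ into $H'$, covering every direction pointing into $H'$ together with two short tips reaching just past $\ell$ near its two directions. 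Consequently, once $Q$ is close enough to $P$: for $Q$ in an outside neighborhood $C_Q$ meets every region of $O$ whose sector touches $H$; and for $Q$ in an inside neighborhood $C_Q$ meets every region whose sector touches $H'$, and in addition -- whenever a border segment of $O$ lies along $\ell$ -- the region on its $H$-side, since there the arc of $C_Q$ crosses that segment and so passes through the regions on both of its sides. (When all four segments point into $H$, the bulge across $\ell$ already forces $C_Q$ to meet all four regions for $Q$ outside $C_O$, and symmetrically.) Since distinct regions at $O$ carry distinct colors, it follows that an outside neighborhood of $P$ excludes at least $r$ of the colors at $O$, where $r$ is the number of regions touching $H$, and an inside neighborhood excludes at least $|T|$ of them, where $T$ is the set consisting of the regions touching $H'$ together with the $H$-side region of each border segment lying on $\ell$. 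This is exactly the heuristic stated before Figure~\ref{fig:inpoint}, with the two exceptions there accounting for border segments on $\ell$ (which contribute through $T$) and for all segments lying on one side of $\ell$ (where the count saturates at four).

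It then remains to check the purely combinatorial fact that, for any arrangement of the four rays from $O$ relative to $\ell$, one has $r\ge 3$ or $T$ equals all four regions. Letting $c\in\{0,1,2\}$ be the number of rays on $\ell$, a short case analysis on $c$ and on the cyclic order of the rays handles this: for $c=0$ one finds that the pair $(r,|R'|)$, with $R'$ the set of regions touching $H'$, is one of $(2,4),(3,3),(4,2),(4,1),(1,4)$, each of which satisfies $r\ge 3$ or $|R'|=4$; for $c\ge 1$, the $H$-side region of an on-$\ell$ ray supplies the regions needed to fill $T$ out to all four whenever $r\le 2$. Granting this, the lemma follows: if $r\ge 3$ then an outside neighborhood of $P$ excludes at least three of the only four colors at $O$, so $P$ is outward or alternative; otherwise $|T|=4$, an inside neighborhood excludes all four colors, and $P$ is inward. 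The main work is in the second paragraph -- making the statement about which regions $C_Q$ meets uniform over a neighborhood of $P$ (a two-scale argument: first fix how near $O$ we examine $C_Q$, then choose how near $P$ the point $Q$ must lie) and treating correctly the two exceptional configurations, a border segment on $\ell$ (where $C_Q$ crosses the segment, picking up an extra excluded color) and all four segments on one side of $\ell$ (where the number of regions met saturates at four); the combinatorial step is elementary but has several sub-cases.
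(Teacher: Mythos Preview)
Your proof is correct and follows essentially the same approach as the paper: both classify $P$ by the disposition of the four border rays at $O$ relative to the direction perpendicular to $OP$, arriving at the five cases $(4,0),(3,1),(2,2),(1,3),(0,4)$ for (rays on the $H'$ side, rays on the $H$ side), which yield inward, inward, alternative, outward, outward respectively---exactly your list $(r,|R'|)\in\{(1,4),(2,4),(3,3),(4,2),(4,1)\}$. The one technical difference is that the paper works with the circle $C_P$ through $O$ rather than its tangent line $\ell$; since a ray along $\ell$ lies outside the convex disk bounded by $C_P$, this absorbs your border-on-$\ell$ correction automatically and removes the need for the set $T$ and the parameter $c$, giving a slightly shorter case split.
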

\begin{proof}
Let $P$ be a point on $C_O$ and $C_P$ be the unit circle centered at $P$. The circle $C_P$ passes through $O$. We examine whether the borders connected to $O$ within a neighborhood of $O$ lie inside or outside of $C_P$. There are five possibilities for how many of these borders lie inside and outside of $C_P$: all four outside, three outside and one inside, two outside and two inside, one outside and three inside, and all four inside.

In the first case of four borders outside of $C_P$ and the second case of three borders outside of $C_P$ and one border inside of $C_P$, $4$ regions are intersected by any unit circle centered at a point within a sufficiently small inside neighborhood of $P$. Thus, the $4$ colors occurring at $O$ are excluded in an inside neighborhood of $P$, which makes $P$ an inward point.

In the third case of two borders outside of $C_P$ and two borders inside of $C_P$, $3$ regions are intersected by any unit circle centered at a point within a sufficiently small outside neighborhood of $P$. Thus, only $3$ of the $4$ colors occurring at $O$ are excluded in an outside neighborhood of $P$, which makes $P$ an alternative point.

In the fourth case of three borders inside of $C_P$ and one border outside of $C_P$, $4$ regions are intersected by any unit circle centered at a point within a sufficiently small outside neighborhood of $P$. Thus, the $4$ colors occurring at $O$ are excluded in an outside neighborhood of $P$, which makes $P$ an outward point.

Lastly, in case of four borders inside of $C_P$, $4$ regions are intersected by any unit circle centered at a point within a sufficiently small outside neighborhood of $P$. Thus, the $4$ colors occurring at $O$ are excluded in an outside neighborhood of $P$, which makes $P$ an outward point. 

Note that a point on $C_O$ can only be of one type. Since these are the only possible cases, all points on $C_O$ must be inward, outward, or alternative points.
\end{proof}

With the understanding that all points on $C_O$ come in three different types, we can define arcs that consist of points of the same type.

\begin{definition}
A maximal arc of $C_O$ consisting of only inward points is an \textit{inward arc}. Similarly, a maximal arc consisting of only outward points is an \textit{outward arc}. Lastly, a maximal arc consisting of only alternative points in which the same three colors are excluded in outside neighborhoods of each point is called an \textit{alternative arc}. 
\end{definition}

Note that an arc could be a single point of $C_O$ and endpoints may or may not be included in an arc. 

With this definition, we can introduce the following lemma, which determines the maximum number of alternative arcs on $C_O$.

\begin{lemma}\label{lem:fouralt}
The circle $C_O$ has at most four alternative arcs on it.
\end{lemma}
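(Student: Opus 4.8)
The plan is to reduce the statement to a counting problem about a moving half-circle of directions. Parametrize a point $P$ on $C_O$ by the direction $\phi$ of the ray from $O$ to $P$, and let $\alpha_1,\dots,\alpha_4$ be the four distinct directions along which the borders at $O$ emanate. A short computation with the law of cosines shows that a border, near $O$, lies inside the unit circle $C_P$ exactly when $\cos(\alpha_i-\phi)>0$, i.e. when $\alpha_i$ lies in the open half-circle $(\phi-\tfrac\pi2,\phi+\tfrac\pi2)$; write $h(\phi)$ for the number of $i$ for which this holds. The case analysis in the proof of Lemma \ref{lem:threetypes} is exactly the split according to the value of $h$, and it shows that $P$ is an alternative point precisely when $h(\phi)=2$, at least for every $\phi$ outside the finite exceptional set $E=\{\phi:\ \alpha_i\equiv\phi\pm\tfrac\pi2\text{ for some }i\}$ of directions for which some border lies on the tangent to $C_P$ at $O$ (the situation of Figure \ref{fig:collinearpair}). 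Note $|E|\le 8$ and $h$ is constant on each arc of $C_O\setminus E$.

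First I would record how $h$ changes across a point of $E$. As $\phi$ increases past such a point, a single border may pass from outside to inside $C_P$ (raising $h$ by $1$) or from inside to outside (lowering $h$ by $1$), and since $\alpha_1,\dots,\alpha_4$ are distinct, two borders change status simultaneously only when they are collinear, in which case one enters as the other leaves and $h$ is unchanged. As the four directions are distinct there are at most two, necessarily disjoint, collinear pairs of borders; a direct check then shows that if there are $p\in\{0,1,2\}$ such pairs, $E$ consists of $2p$ net-zero directions and $8-4p$ simple ($\pm 1$) directions.

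The key reduction is that the number of alternative arcs is at most the number of arcs of $C_O\setminus E$ on which $h\equiv 2$: on such an arc the pair of borders inside $C_P$, hence the region between them, hence the three colors excluded in outside neighborhoods, are all constant, so the arc lies inside one alternative arc, and every alternative arc is a union of such arcs glued along points of $E$. To bound this number, group the arcs of $C_O\setminus E$ into blocks delimited by the simple directions (net-zero directions lying in the interior of blocks); consecutive blocks have $h$-values differing by $1$, and since the $\pm 1$ changes sum to $0$ around the circle, between any two cyclically consecutive value-$2$ blocks there are at least two simple directions. Hence there are at most $(8-4p)/2=4-2p$ value-$2$ blocks, and as a value-$2$ block with $t$ net-zero directions consists of $t+1$ arcs while there are $2p$ net-zero directions in all, there are at most $(4-2p)+2p=4$ value-$2$ arcs; the degenerate case $p=2$, where $h\equiv 2$ and $C_O\setminus E$ has four arcs, is checked directly. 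This gives the bound of four.

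The step I expect to be the main obstacle is justifying, in this reduction, that every alternative arc is genuinely built from value-$2$ arcs of $C_O\setminus E$ — equivalently, that no direction in $E$ is an isolated alternative point both of whose neighboring arcs have $h\neq 2$. This calls for examining directly the configuration at an exceptional direction, where a border lies on the tangent line to $C_P$ at $O$, using the border-counting heuristic described just after the definition of an alternative point together with Lemma \ref{lem:threetypes}; the bookkeeping there — especially handling the subcases according to how many borders lie strictly on each side of the tangent and whether a collinear pair is present — is where the care is needed.
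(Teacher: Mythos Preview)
Your approach is genuinely different from the paper's. The paper argues by a short geometric case split on the border configuration at $O$ (one collinear pair with the other two on one side; a reflex angle; the generic case), reading off the alternative arcs directly in each case. You instead recast everything as a step function $h(\phi)\in\{0,\dots,4\}$ on the circle and count value-$2$ arcs combinatorially. The reframing is nice and would scale to higher-degree vertices, but it trades geometric transparency for bookkeeping at the exceptional directions.

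That bookkeeping is where the argument actually breaks, not merely where it gets delicate. Your reduction needs that every alternative arc contains a value-$2$ arc of $C_O\setminus E$, equivalently that no point of $E$ is an isolated alternative point flanked by arcs with $h\neq 2$. This is false, and the paper's own Figure~\ref{fig:collinearpair} is a counterexample. Take borders at directions $0,\pi/4,3\pi/4,\pi$; then $\phi=\pi/2$ is a net-zero direction (the collinear pair $0,\pi$ lies on the tangent to $C_P$ at $O$), both adjacent arcs have $h=3$, yet the point is alternative: for any $P'$ in a small outside neighborhood the circle $C_{P'}$ meets exactly the three ``upper'' regions and misses the reflex one. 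The paper explicitly counts this as one of three alternative arcs, ``one of which is a single point.'' Here there are only two value-$2$ arcs but three alternative arcs, so your inequality fails.

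The repair is not far off, but it is not the one you sketch. Isolated alternative points of this kind can only sit at net-zero directions, and there are $2p$ of those; moreover, when such an isolated point exists, that net-zero direction lies outside every value-$2$ block and hence was not used in your count of value-$2$ arcs. Splitting the $2p$ net-zero directions into those inside value-$2$ blocks (which add to the value-$2$ arc count) and those outside (which may contribute isolated alternative points) still yields the bound $(4-2p)+2p=4$. But you would need to prove that dichotomy and rule out isolated alternative points at simple directions, rather than trying to show isolated alternative points do not exist at all.
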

\begin{proof}
A point $P$ is an alternative point if and only if a unit circle $C_P$ centered at $P$ intersects two opposite regions bordering $O$. This is because within a neighborhood of $O$ two borders connected to $O$ must lie inside $C_P$ and two borders must lie outside $C_P$. There are three cases that are considered for the borders connected to $O$ (see Figure \ref{fig:altarcs}). The case where only two borders are collinear and all other borders lie on one side of the pair, the case where two borders form an angle greater than $\pi$, and the case where the borders are oriented in any other way.

We first consider the case of one pair of collinear borders. In this case, there are three arcs, one of which is a single point, such that the unit circle centered at any point on each arc intersects two opposite regions bordering $O$.

We next consider the case of two borders forming an angle greater than $\pi$. In this case, there are only two alternative arcs such that the unit circle centered at any point on each arc intersects two opposite regions bordering $O$.

Lastly, we consider the general case. In this case, there are four alternative arcs because each pair of opposite regions forces two alternative arcs.

Therefore, there are at most four alternative arcs on $C_O$.
\end{proof}

\begin{figure}[ht]
        \centering
        \begin{subfigure}[b]{0.33\textwidth}
                \centering
                
                \begin{tikzpicture}[line cap=round,line join=round,>=triangle 45,x=1.0cm,y=1.0cm, scale=0.5]
\draw [line width=1.pt, color=gray] (0.,0.) circle (5.cm);
\draw [line width=1.pt] (-2.,0.)-- (2.,0.);
\draw [line width=1.pt] (-1.4142135623730951,1.414213562373095)-- (0.,0.);
\draw [line width=1.pt] (0.,0.)-- (1.4142135623730951,1.414213562373095);
\draw [shift={(0.,5.)},line width=1.pt, dotted]  plot[domain=3.141592653589793:6.283185307179586,variable=\t]({1.*5.*cos(\t r)+0.*5.*sin(\t r)},{0.*5.*cos(\t r)+1.*5.*sin(\t r)});
\draw [shift={(0.,0.)},line width=2pt,color=uuuuuu]  plot[domain=-0.7853981633974483:0.7853981633974484,variable=\t]({1.*5.*cos(\t r)+0.*5.*sin(\t r)},{0.*5.*cos(\t r)+1.*5.*sin(\t r)});
\draw [shift={(0.,0.)},line width=2pt,color=uuuuuu]  plot[domain=2.356194490192345:3.9269908169872414,variable=\t]({1.*5.*cos(\t r)+0.*5.*sin(\t r)},{0.*5.*cos(\t r)+1.*5.*sin(\t r)});
\begin{scriptsize}
\draw [fill=uuuuuu] (0.,0.) circle (2.0pt);
\draw [fill=uuuuuu] (-2.,0.) circle (2pt);
\draw [fill=uuuuuu] (2.,0.) circle (2pt);
\draw [fill=uuuuuu] (1.4142135623730951,1.414213562373095) circle (2pt);
\draw [fill=uuuuuu] (-1.4142135623730951,1.414213562373095) circle (2pt);

\draw [fill=uuuuuu] (0.,5.) circle (3.0pt);
\draw [fill=uuuuuu] (3.5355339059327373,3.5355339059327378) circle (3pt);
\draw [fill=white] (3.5355339059327373,-3.5355339059327378) circle (3pt);

\draw [fill=white] (-3.5355339059327373,-3.5355339059327378) circle (3pt);
\draw [fill=uuuuuu] (-3.5355339059327373,3.5355339059327378) circle (3pt);

\draw (-.35,0) node[anchor=north west] {$O$};

\end{scriptsize}
\end{tikzpicture}

                \caption{One collinear pair}
		\label{fig:collinearpair}
        \end{subfigure}%
        \begin{subfigure}[b]{0.33\textwidth}
                \centering
                
                \begin{tikzpicture}[line cap=round,line join=round,>=triangle 45,x=1.0cm,y=1.0cm, scale=0.5]
\draw [line width=1.pt,color=gray] (0.,0.) circle (5.cm);
\draw [line width=1.pt] (0.,0.)-- (1.5850528510439255,1.2196751450273648);
\draw [line width=1.pt] (0.,0.)-- (-1.521425946097175,1.2981768333098214);
\draw [line width=1.pt] (0.,0.)-- (-0.6400020690936958,1.8948343863134287);
\draw [line width=1.pt] (0.,0.)-- (0.7561177966425573,1.8515630903645721);
\draw [shift={(0.,5.)},line width=1.pt, dotted]  plot[domain=3.141592653589793:6.283185307179586,variable=\t]({1.*5.*cos(\t r)+0.*5.*sin(\t r)},{0.*5.*cos(\t r)+1.*5.*sin(\t r)});
\draw [shift={(0.,0.)},line width=2.pt]  plot[domain=-0.38769867977257366:0.3257305792598854,variable=\t]({1.*5.*cos(\t r)+0.*5.*sin(\t r)},{0.*5.*cos(\t r)+1.*5.*sin(\t r)});
\draw [shift={(0.,0.)},line width=2.pt]  plot[domain=2.7538939738172195:3.4673232328496786,variable=\t]({1.*5.*cos(\t r)+0.*5.*sin(\t r)},{0.*5.*cos(\t r)+1.*5.*sin(\t r)});

\begin{scriptsize}
\draw [fill=black] (0.,0.) circle (2pt);
\draw [fill=black] (1.5850528510439255,1.2196751450273648) circle (2pt);
\draw [fill=black] (-1.521425946097175,1.2981768333098214) circle (2pt);
\draw [fill=black] (-0.6400020690936958,1.8948343863134287) circle (2pt);
\draw [fill=black] (0.7561177966425573,1.8515630903645721) circle (2pt);
\draw [fill=ffffff] (4.62890772591143,-1.8902944916063933) circle (3.0pt);
\draw [fill=uuuuuu] (4.737085965783573,1.60000517273424) circle (3.0pt);
\draw [fill=uuuuuu] (-4.62890772591143,1.8902944916063933) circle (3.0pt);
\draw [fill=ffffff] (-4.737085965783572,-1.6000051727342397) circle (3.0pt);
\draw (-.35,0) node[anchor=north west] {$O$};
\end{scriptsize}
\end{tikzpicture}

                \caption{Concave example}
		\label{fig:concave}
        \end{subfigure}%
        \begin{subfigure}[b]{0.33\textwidth}
                \centering
               \begin{tikzpicture}[line cap=round,line join=round,>=triangle 45,x=1.0cm,y=1.0cm, scale=0.5]
\draw [line width=1.pt, color=gray] (0.,0.) circle (5.cm);
\draw [line width=1.pt] (-1.0970414663416408,1.672273907327081)-- (0.,0.);
\draw [line width=1.pt] (0.,0.)-- (1.4643382564060556,1.362245745387253);
\draw [line width=1.pt] (0.,0.)-- (1.7953230740095407,-0.8813711249688936);
\draw [line width=1.pt] (0.,0.)-- (-1.1070833348303741,-1.665642965866592);
\draw [shift={(0.,5.)},line width=1.pt, dotted]  plot[domain=3.141592653589793:6.283185307179586,variable=\t]({1.*5.*cos(\t r)+0.*5.*sin(\t r)},{0.*5.*cos(\t r)+1.*5.*sin(\t r)});
\draw [shift={(0.,0.)},line width=2pt]  plot[domain=2.5549817831941235:3.722186694582359,variable=\t]({1.*5.*cos(\t r)+0.*5.*sin(\t r)},{0.*5.*cos(\t r)+1.*5.*sin(\t r)});
\draw [shift={(0.,0.)},line width=2pt]  plot[domain=1.1144340759794773:2.3200914943461477,variable=\t]({1.*5.*cos(\t r)+0.*5.*sin(\t r)},{0.*5.*cos(\t r)+1.*5.*sin(\t r)});
\draw [shift={(0.,0.)},line width=2pt]  plot[domain=-0.5866108703956696:0.5805940409925657,variable=\t]({1.*5.*cos(\t r)+0.*5.*sin(\t r)},{0.*5.*cos(\t r)+1.*5.*sin(\t r)});
\draw [shift={(0.,0.)},line width=2pt]  plot[domain=4.25602672956927:5.461684147935941,variable=\t]({1.*5.*cos(\t r)+0.*5.*sin(\t r)},{0.*5.*cos(\t r)+1.*5.*sin(\t r)});
\begin{scriptsize}
\draw [fill=uuuuuu] (0.,0.) circle (2.0pt);
\draw [fill=uuuuuu] (-1.0970414663416408,1.672273907327081) circle (2pt);
\draw [fill=uuuuuu] (1.4643382564060556,1.362245745387253) circle (2pt);
\draw [fill=uuuuuu] (1.7953230740095407,-0.8813711249688936) circle (2pt);
\draw [fill=uuuuuu] (-1.1070833348303741,-1.665642965866592) circle (2pt);

\draw [fill=white] (-4.164107414666481,2.767708337075936) circle (3.0pt);
\draw [fill=white] (-4.180684768317702,-2.742603665854102) circle (3.0pt);
\draw [fill=white] (-3.4056143634681324,3.660845641015139) circle (3.0pt);
\draw [fill=uuuuuu] (2.203427812422234,4.488307685023852) circle (3pt);
\draw [fill=uuuuuu] (4.16410741466648,-2.7677083370759354) circle (3pt);
\draw [fill=uuuuuu] (4.180684768317702,2.742603665854102) circle (3pt);
\draw [fill=white] (-2.203427812422234,-4.488307685023852) circle (3pt);
\draw [fill=uuuuuu] (3.405614363468133,-3.6608456410151398) circle (3pt);

\draw (-.35,0) node[anchor=north west] {$O$};

\end{scriptsize}
\end{tikzpicture}                
		\caption{Other orientation}
		\label{fig:otherorientation}
        \end{subfigure}%
         \caption{The dark arcs are alternative arcs and the dotted circles are examples of $C_P$.}
         \label{fig:altarcs}
\end{figure}
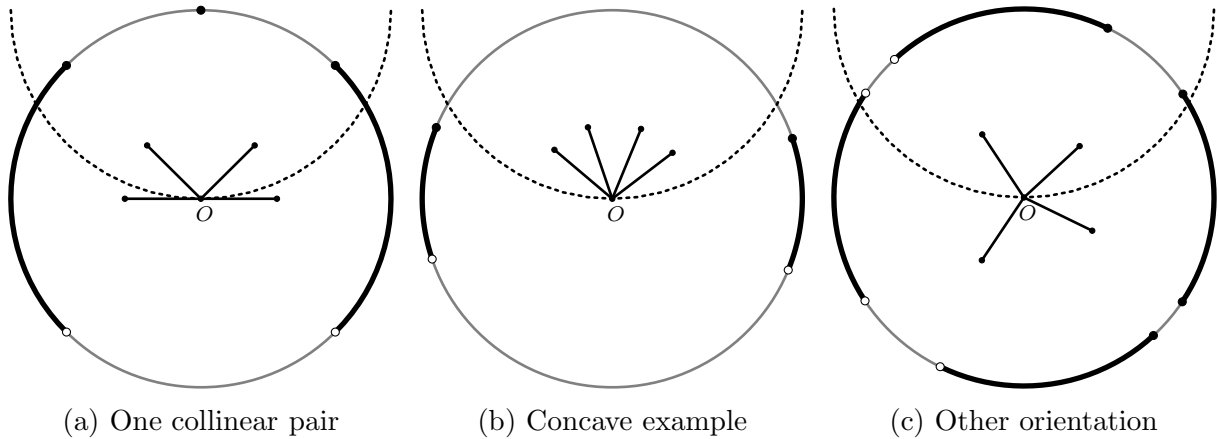

Note that the only two cases in which the number of alternative arcs on $C_O$ is fewer than four occurs when only two borders are collinear with all other borders on one side of the pair as shown in Figure \ref{fig:collinearpair} or when two borders form an angle greater than $\pi$ as shown in Figure \ref{fig:concave}.

\subsection{Lemmas about crossings}

The following lemmas all refer to a polygon coloring with a fixed 4-colored vertex $O$ and the circle $C_O$ of unit radius centered at $O$.

\begin{lemma}\label{lem:crossinglem}
The two regions that border a crossing $P$ on $C_O$ and contain points on $C_O$ must be colored a fifth and sixth color that do not occur at $O$.
\end{lemma}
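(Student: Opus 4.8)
The plan is to establish two facts and combine them: (i) each of the two regions $R_1,R_2$ that border $P$ and meet $C_O$ avoids all four colors occurring at $O$, and (ii) $R_1$ and $R_2$ must themselves be colored differently. Together these force two new colors beyond the four at $O$.

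For (i), I would argue purely by a distance/intermediate-value estimate. Fix one of the two regions, say $R_1$, and a point $Q\in R_1\cap C_O$ close to $P$; since $R_1$ is open it contains a small disk $B$ centered at $Q$, and moving from $Q$ a little toward $O$ and a little away from $O$ shows that $B$ contains points at distance strictly less than $1$ and strictly greater than $1$ from $O$. Now let $S$ be any of the four regions bordering $O$. Because $O\in\overline S$ and $S$ is open, $S$ contains a point $y$ with $|y-O|$ smaller than the radius of $B$. Then, on the connected set $B$, the continuous function $x\mapsto|x-y|$ takes a value below $1$ (near the interior‑of‑$C_O$ point of $B$) and a value above $1$ (near the exterior point), hence takes the value $1$; so some $x\in R_1$ lies at distance exactly $1$ from $y\in S$, and $R_1$, $S$ must receive different colors. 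Letting $S$ range over all four regions at $O$ shows that the color of $R_1$ is none of the colors occurring at $O$, and the same argument applies verbatim to $R_2$.

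For (ii), I would invoke the definition of a polygon coloring directly: since $P$ is a crossing, either $P$ is an interior point of a border whose two sides are $R_1$ and $R_2$, or $P$ is a vertex incident to both $R_1$ and $R_2$. In either case $R_1$ and $R_2$ are regions enclosed by a common border or vertex, so by definition they are colored differently. Combining (i) and (ii), $R_1$ and $R_2$ carry two distinct colors, neither occurring at $O$ — that is, a fifth and a sixth color — which is exactly the claim.

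The genuinely delicate point, and the only one I expect to require care, is confirming that a crossing really does border (at least) two regions that meet $C_O$ and that these are separated by a border or vertex of $P$: this is precisely what distinguishes a crossing from a tangency or a pseudo‑crossing (where $C_O$, near $P$, stays inside a single region), and it must accommodate the case where the relevant border at $P$ is nearly tangent to $C_O$. The intermediate‑value estimate itself is routine and, notably, does not depend on which of the four regions at $O$ is under consideration.
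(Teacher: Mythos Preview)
Your proposal is correct and follows essentially the same route as the paper: for (i) the paper cites Lemma~\ref{lem:neighlem} (applied to a point of $C_O$ lying inside each of $R_1,R_2$), which is precisely the intermediate-value argument you wrote out directly, and for (ii) both you and the paper invoke the definition of a polygon coloring to separate the colors of $R_1$ and $R_2$. The only difference is that the paper abstracts the distance argument into that earlier lemma rather than inlining it here.
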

\begin{proof}
Because the coloring is locally finite, there must be two regions that border $P$. Applying Lemma \ref{lem:neighlem} to a point in each region, we find that these regions cannot be colored any of the colors which occur at $O$. Since both regions cannot be colored the same fifth color by our definition, one region must be colored a fifth color and the other must be colored a sixth color.
\end{proof}

Using Lemma \ref{lem:crossinglem}, we can now introduce lemmas that prove that certain crossings necessitate seven colors for the coloring.

\begin{lemma}\label{lem:inoutlem}
If a crossing $P$ on $C_O$ is an inward or outward point, then at least seven colors are needed for the coloring.
\end{lemma}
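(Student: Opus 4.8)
I would argue by contradiction, supposing the coloring uses only six colors. The inward and outward cases are mirror images of each other: replacing every occurrence of ``inside neighborhood'' by ``outside neighborhood'' turns one argument into the other, so it suffices to treat the case that the crossing $P$ is an inward point. Write $c_1,\dots,c_4$ for the colors occurring at $O$. Since $P$ is a crossing, Lemma~\ref{lem:crossinglem} provides two regions $R_1,R_2$ bordering $P$, each carrying an arc of $C_O$ that ends at $P$, colored with two distinct colors $c_5,c_6$ that are not among $c_1,\dots,c_4$; with only six colors the entire palette is $\{c_1,\dots,c_6\}$. Because $P$ is inward, there is an inside neighborhood $N$ of $P$ in which $c_1,\dots,c_4$ are all excluded, so every region meeting $N$ is colored $c_5$ or $c_6$.

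The next step is a purely local analysis at $P$. The regions $R_1$ and $R_2$ are exactly the two regions at $P$ that straddle $C_O$, so both of them meet $N$. Suppose a third region also meets $N$; this happens precisely when at least two borders at $P$ point into the interior of $C_O$ (so $P$ is a vertex). Then the region lying between two such inward borders borders $P$, lies inside $C_O$ near $P$, and is sandwiched between $R_1$ and $R_2$ in the cyclic order of regions around $P$; hence it meets $N$ yet is adjacent to both $R_1$ and $R_2$, so its color must avoid all of $c_1,\dots,c_6$, which is impossible. This already settles every crossing that is a vertex with two or more inward borders.

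It remains to handle the case in which exactly one border at $P$ enters $C_O$, i.e.\ $P$ is an interior point of a single transversal border, or a vertex with a single inward border; here the only regions meeting $N$ are $R_1$ and $R_2$, and the counting above yields nothing. The plan is to exploit the rigidity that $N$ is forced to be two-colored with $c_5$ and $c_6$ and to propagate it away from $P$: by following $C_O$ (or the inward border) a short distance, or by considering unit circles centered at points of $N$, which by inwardness sweep all four regions of $O$ and can be steered so as to also meet a further region, thereby forcing either a region that must receive a seventh color or two points at distance one with the same color, contradicting six-colorability. I expect essentially all the difficulty of the lemma to sit in this last configuration; the two-inward-border case of the previous paragraph is the clean, motivating one.
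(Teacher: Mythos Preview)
Your argument is incomplete, and the gap sits exactly where you say ``essentially all the difficulty'' lies. The case you dispatch cleanly---a vertex crossing with at least two inward borders---is a special configuration; the generic crossing is a border point (or a vertex with a single inward border), and for that you offer only a plan. That plan, as stated, does not work: the regions $R_1,R_2$ carrying colors $c_5,c_6$ sit \emph{at} $P$, so unit circles centered at points of $N$ do not meet them (those circles pass near $O$, a unit away). Purely local information at $P$ gives you only that $N$ is two-colored with $c_5,c_6$, and there is nothing to propagate to.

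The missing idea is to bring in a second point of $C_O$: let $Q\in C_O$ be at distance~$1$ from $P$. If $Q$ is not a crossing, then by Lemma~\ref{lem:neighlem} a region at $Q$ avoids the four colors at $O$, and since $|PQ|=1$ it also avoids the two colors $c_5,c_6$ occurring at $P$; that forces a seventh color. If $Q$ \emph{is} a crossing, then by Lemma~\ref{lem:crossinglem} colors $c_5,c_6$ occur at $Q$ as well; now look at points of $C_Q$ in the inside (resp.\ outside) neighborhood of $P$. Such points lie in a region, are at distance~$1$ from $Q$ (so avoid $c_5,c_6$), and lie in the neighborhood where $c_1,\dots,c_4$ are excluded---again a seventh color. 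This argument is uniform in the inward/outward dichotomy and never needs your case split on the number of inward borders at $P$.
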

\begin{proof}
Let $Q$ be a point at distance one from $P$ on $C_O$. Since a fifth and sixth color occur at $P$ by Lemma \ref{lem:crossinglem}, if $Q$ is not a crossing, then the region $Q$ lies in or borders must be colored a seventh color by Lemma \ref{lem:neighlem}. 

Hence, we consider the case where $Q$ is a crossing (see Figure \ref{fig:inout}). Since a fifth and sixth color occur at $Q$ by Lemma \ref{lem:crossinglem}, we consider the points on $C_Q$ within a neighborhood of $P$. Regardless of whether $P$ is an inward or outward point, there are points on $C_Q$ that lie inside a region and also in an inside or outside neighborhood of $P$. Since these points cannot be colored any of the six colors that occur at $O$ and $Q$ by Lemma \ref{lem:neighlem}, at least seven colors are needed.
\end{proof}

In addition to considering the case where crossings are inward or outward points, we also introduce a lemma that considers the case of crossings that are alternative points.

\begin{lemma}\label{lem:altlem}
If two crossings lie at distance one on the same alternative arc, then at least seven colors are needed for the coloring.
\end{lemma}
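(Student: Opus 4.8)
The plan is to argue by contradiction, so suppose the coloring uses at most six colors. Since $O$ is a $4$-colored vertex, the four regions at $O$ exhaust four colors, say $1,2,3,4$, and hence exactly two colors, $5$ and $6$, do not occur at $O$. Applying Lemma~\ref{lem:crossinglem} to the crossing $P_1$, the two regions that border $P_1$ and meet $C_O$ are colored $5$ and $6$; write $R_1^{(5)}$ and $R_1^{(6)}$ for them, and similarly let $R_2^{(5)},R_2^{(6)}$ be the corresponding regions at $P_2$, again colored $5$ and $6$. Everything then reduces to producing two points at distance one that receive the same color, which is the desired contradiction.

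The device I would use is translation by the vector $P_2-P_1$: since $|P_1-P_2|=1$, for every point $p$ in a small neighborhood of $P_1$ the point $q=p+(P_2-P_1)$ lies on the unit circle $C_p$ and lies near $P_2$. Fix a small disk $D$ about $P_2$ meeting only the regions, borders and vertices incident to $P_2$. I would first dispose of the case in which neither $P_1$ nor $P_2$ is a vertex, so that near $P_1$ the plane is $R_1^{(5)}\cup R_1^{(6)}$ split by a single border $b_1$, and inside $D$ it is $R_2^{(5)}\cup R_2^{(6)}$ split by a single border $b_2$ through $P_2$; neither $b_1$ nor $b_2$ is tangent to $C_O$, since a crossing cannot lie on a tangent border. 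The set $C_p\cap D$ is a connected circular arc through $q$, with tangent there perpendicular to $P_2-P_1$ and curving toward $p$, that is, in the direction $P_1-P_2$, and its size is bounded below independently of $p$. If $b_2$ is not perpendicular to $P_2-P_1$, this arc is transversal to $b_2$ with angle bounded below, so for $p$ close enough to $P_1$ (which makes $q$ arbitrarily close to $P_2\in b_2$) the arc must cross $b_2$ and therefore meets both $R_2^{(5)}$ and $R_2^{(6)}$; choosing $p\in R_1^{(5)}$ then gives a point of $C_p$ inside $R_2^{(5)}$ at distance one from $p$, with both points colored $5$, a contradiction. The symmetric argument, translating from near $P_2$ by $P_1-P_2$, settles the case where $b_1$ is not perpendicular to $P_1-P_2$. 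In the remaining case $b_1$ and $b_2$ are both perpendicular to $P_1-P_2$, hence parallel, and $C_p\cap D$ is nearly parallel to $b_2$; here I would pick $p$ on the side of $b_1$ for which $q$ lies on the side of $b_2$ opposite to $P_1$, and since the arc bends toward the $P_1$-side a second-order estimate shows it crosses $b_2$ once near $q$, so it again meets the region at $P_2$ of the same color as $p$. Either way we get two like-colored points at distance one, so in every non-vertex case we are done.

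The step I expect to be the main obstacle is the case in which $P_1$ or $P_2$ is a vertex: then the translate $q$ can land in a region incident to $P_2$ that does not meet $C_O$, whose color the crossing lemma does not control, and the clean argument stalls. This is exactly where the hypothesis that $P_1$ and $P_2$ lie on a common alternative arc $A$ enters. All points of $A$ have the same triple of colors of $O$ excluded in their outside neighborhoods, and unpacking Lemma~\ref{lem:threetypes} shows that (apart from the degenerate configurations it lists) three colors of $O$ are also excluded in their inside neighborhoods; consequently a region incident to $P_2$ that does not meet $C_O$ lies entirely in the inside neighborhood or entirely in the outside neighborhood of $P_2$ and so is colored with one of the at most two colors of $O$ left free by these exclusions, while, sharing the vertex $P_2$ with $R_2^{(5)}$ and $R_2^{(6)}$, it avoids $5$ and $6$; the same description holds at $P_1$. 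With the palette available in each angular sector at $P_1$ and at $P_2$ thus pinned down, I would rerun the translation-and-arc argument with $p$ ranging over all regions incident to $P_1$: because the arc $C_p\cap D$ can be steered across any border at $P_2$ and hence into both regions flanking it, a finite case analysis on the few possible color assignments should again force a monochromatic pair at distance one. Carrying out this case analysis carefully — in particular checking that every color occurring near $P_1$ is matched by a reachable region of that color near $P_2$, and handling the degenerate border orientations — is the technical heart of the proof and the part I would budget the most effort for.
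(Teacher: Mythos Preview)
Your approach is quite different from the paper's and leaves the central case as an unexecuted sketch. You aim to produce a monochromatic unit-distance pair by translating from a neighborhood of $P_1$ to a neighborhood of $P_2$ and steering the arc $C_p\cap D$ across borders at $P_2$. The paper works entirely in the exclusion framework of Lemma~\ref{lem:neighlem}: instead of hunting for two equally-colored points, it names a single region that is forbidden all six colors. Concretely, it looks at the region $R$ bordering $P$ that lies in the outside neighborhood of $P$ and contains points of $C_Q$. Because $P$ and $Q$ lie on the \emph{same} alternative arc, the same three colors of $O$ (say $1,2,3$) are excluded in both outside neighborhoods; the extra region at $Q$ not meeting $C_O$ must then carry the remaining color $4$ (or a seventh, and we are done), so colors $4,5,6$ all occur at $Q$. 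Now $R$ is in the outside neighborhood of $P$, so $1,2,3$ are excluded there; and $R$ contains points of $C_Q$, so by the Lemma~\ref{lem:neighlem} mechanism applied at $Q$ the colors $4,5,6$ are excluded as well. That exhausts six colors in two lines once the right region is named.

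Your route never reaches this idea. The non-vertex subcase you treat is essentially correct (and, tellingly, uses the alternative-arc hypothesis nowhere), but the vertex case --- which you correctly flag as the heart of the matter --- is only a plan. The sketch asserts that $C_p\cap D$ ``can be steered across any border at $P_2$'' and that a finite case check will then match colors, but these are genuine geometric claims about how the angular sectors at $P_1$ line up with those at $P_2$ under translation, and you have not verified them; for instance a point $p$ in the outside-neighborhood sector at $P_1$ need not translate into the outside-neighborhood sector at $P_2$, so matching the ``free'' $O$-color at $P_1$ to that at $P_2$ is not automatic. Your proposal also glosses over the fact that the three $O$-colors excluded in the inside neighborhood of an alternative point are not the same three excluded in the outside neighborhood, which complicates the palette bookkeeping you rely on. The paper sidesteps all of this by switching the target from ``two points with the same color'' to ``one region with no legal color'', which is what makes the same-arc hypothesis do real work.
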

\begin{proof}
Let $P$ and $Q$ be the crossings and let $C_P$ and $C_Q$ be the unit circles centered at $P$ and $Q$ respectively. Since a fifth and sixth color occur at $Q$ by Lemma \ref{lem:crossinglem}, if the points on $C_Q$ within an outside neighborhood of $P$ lie within a region that $C_O$ passes through, then at least seven colors are needed by Lemma \ref{lem:neighlem} . 

Therefore, we consider the case in which points on $C_P$ and $C_Q$ within outside neighborhoods at $P$ and $Q$ lie within regions that $C_O$ does not pass through. Without loss of generality, consider a region bordering $Q$ that only $C_P$ passes through. If this region is colored a seventh color, then we are done. 

Suppose that this region is not colored a seventh color. Since $Q$ is an alternative point, this region can only be colored the fourth color not excluded from the colors occurring at $O$ (see Figure \ref{fig:altalt}). By Lemma \ref{lem:neighlem}, a region which borders $P$ and contains points on $C_Q$ cannot be colored the fourth, fifth, and six colors that occur at $Q$. Since $P$ lies on the same alternative arc as $Q$, the other three colors occurring at $O$ are also excluded in the region that borders $P$. Thus, at least seven colors are needed. \end{proof}

\begin{figure}[ht]
        \centering
        \begin{subfigure}[b]{0.5\textwidth}
                \centering
               \begin{tikzpicture}[line cap=round,line join=round,>=triangle 45,x=1.0cm,y=1.0cm, scale=0.5]
\draw [line width=1.pt] (0.,0.) circle (5.cm);
\draw [line width=1.pt] (-0.6816229033466099,1.0478025661513422)-- (0.,0.);
\draw [line width=1.pt] (0.,0.)-- (-1.2387888894394545,-0.16703917924056844);
\draw [line width=1.pt] (0.,0.)-- (0.4500586699083075,1.1661677382093731);
\draw [line width=1.pt] (0.,0.)-- (0.5506276500054399,-1.122189463080761);
\draw [line width=1.pt] (-3.254745078836226,5.3265505563662305)-- (-1.7452549211637716,3.3337034814781568);
\draw [line width=1.pt] (1.6504184119090541,3.4132240114143837)-- (2.5,4.330127018922193);
\draw [line width=1.pt] (2.5,4.330127018922193)-- (2.890644357935366,3.1427361878925497);
\draw [line width=1.pt] (2.5,4.330127018922193)-- (3.0412302605588284,5.456878901572239);
\draw [shift={(2.5,4.330127018922193)},line width=1.pt, dotted]  plot[domain=2.7862605813657595:5.4534430523534745,variable=\t]({1.*5.*cos(\t r)+0.*5.*sin(\t r)},{0.*5.*cos(\t r)+1.*5.*sin(\t r)});
\begin{scriptsize}
\draw [fill=uuuuuu] (0.,0.) circle (2.0pt);

\draw (-1.0699336262195465,0.5821198678064264) node[anchor=north west] {1};
\draw (-0.42442144355598228,1.1097017386609105) node[anchor=north west] {2};
\draw (0.4276430052507715,0.4934979134723176) node[anchor=north west] {3};
\draw (-0.60476464512641364,-0.1144271589525982) node[anchor=north west] {4};
\draw (1.9503886178142224,5.324844522833219) node[anchor=north west] {5};
\draw (2.6248659563823267,4.982088431501437) node[anchor=north west] {6};
\draw (-2.5633708813080254,5.175879366928679) node[anchor=north west] {7};

\draw[color=uuuuuu] (0.3752301154894207,0.18953537725985969) node {$O$};
\draw[color=uuuuuu] (-2.9675102576898646,4.34950394095487) node {$P$};
\draw[color=black] (1.9455628382915209,4.222951674811681) node {$Q$};

\draw [fill=black] (2.5,4.330127018922193) circle (3pt);
\draw [fill=uuuuuu] (-2.5,4.330127018922193) circle (3.0pt);
\draw [fill=black] (-0.6816229033466099,1.0478025661513422) circle (2pt);
\draw [fill=black] (-1.2387888894394545,-0.16703917924056844) circle (2pt);
\draw [fill=black] (0.4500586699083075,1.1661677382093731) circle (2pt);
\draw [fill=black] (0.5506276500054399,-1.122189463080761) circle (2pt);
\end{scriptsize}
\end{tikzpicture}
                \caption{Crossing $P$ is an outward point}
		\label{fig:inout}
        \end{subfigure}%
                \begin{subfigure}[b]{0.5\textwidth}
                \centering
                \begin{tikzpicture}[line cap=round,line join=round,>=triangle 45,x=1.0cm,y=1.0cm, scale=0.5]
\draw [line width=1.pt] (0.,0.) circle (5.cm);
\draw [line width=1.pt] (-0.6816229033466099,1.0478025661513422)-- (0.,0.);
\draw [line width=1.pt] (0.,0.)-- (-0.8590635758968608,-0.908025204808379);
\draw [line width=1.pt] (0.,0.)-- (0.4500586699083075,1.1661677382093731);
\draw [line width=1.pt] (0.,0.)-- (0.5506276500054399,-1.122189463080761);

\draw [line width=1.pt] (-3.254745078836226,5.3265505563662305)-- (-1.7452549211637716,3.3337034814781568);
\draw [line width=1.pt] (1.6504184119090541,3.4132240114143837)-- (2.5,4.330127018922193);
\draw [line width=1.pt] (2.5,4.330127018922193)-- (2.890644357935366,3.1427361878925497);
\draw [line width=1.pt] (2.5,4.330127018922193)-- (3.421857703399558,5.174329823159951);

\draw [line width=1.pt] (-2.5,4.330127018922193)-- (-2.124468540622823,5.522383754285029);
\draw [line width=1.pt] (2.5,4.330127018922193)-- (2.5081458592324717,5.580100476631341);

\draw [shift={(2.5,4.330127018922193)},line width=1.pt, dotted]  plot[domain=2.7862605813657595:5.4534430523534745,variable=\t]({1.*5.*cos(\t r)+0.*5.*sin(\t r)},{0.*5.*cos(\t r)+1.*5.*sin(\t r)});
\begin{scriptsize}
\draw [fill=uuuuuu] (0.,0.) circle (2.0pt);

\draw (-0.9699336262195465,0.4221198678064264) node[anchor=north west] {1};
\draw (-0.42442144355598228,1.1097017386609105) node[anchor=north west] {2};
\draw (0.4276430052507715,0.4934979134723176) node[anchor=north west] {3};
\draw (-0.45476464512641364,-0.1144271589525982) node[anchor=north west] {4};

\draw (1.8003886178142224,5.324844522833219) node[anchor=north west] {5};
\draw (2.4003886178142224,5.424844522833219) node[anchor=north west] {4};
\draw (2.7248659563823267,4.882088431501437) node[anchor=north west] {6};

\draw (-3.3633708813080254,4.875879366928679) node[anchor=north west] {6};
\draw (-3.0633708813080254,5.475879366928679) node[anchor=north west] {7};
\draw (-2.4033708813080254,5.275879366928679) node[anchor=north west] {5};

\draw[color=uuuuuu] (0.3752301154894207,0.18953537725985969) node {$O$};
\draw[color=uuuuuu] (-1.9875102576898646,4.23950394095487) node {$P$};
\draw[color=black] (1.9455628382915209,4.222951674811681) node {$Q$};

\draw [fill=black] (2.5,4.330127018922193) circle (3pt);
\draw [fill=uuuuuu] (-2.5,4.330127018922193) circle (3.0pt);
\draw [fill=black] (-0.6816229033466099,1.0478025661513422) circle (2pt);
\draw [fill=black] (-0.8590635758968608,-0.908025204808379) circle (2pt);
\draw [fill=black] (0.4500586699083075,1.1661677382093731) circle (2pt);
\draw [fill=black] (0.5506276500054399,-1.122189463080761) circle (2pt);
\end{scriptsize}
\end{tikzpicture}
                               \caption{Crossings $P$ and $Q$ are alternative points}
		\label{fig:altalt}
        \end{subfigure}%
        \caption{Examples of $P$ and $Q$ on $C_O$}
        \label{fig:sevencolors}
\end{figure}
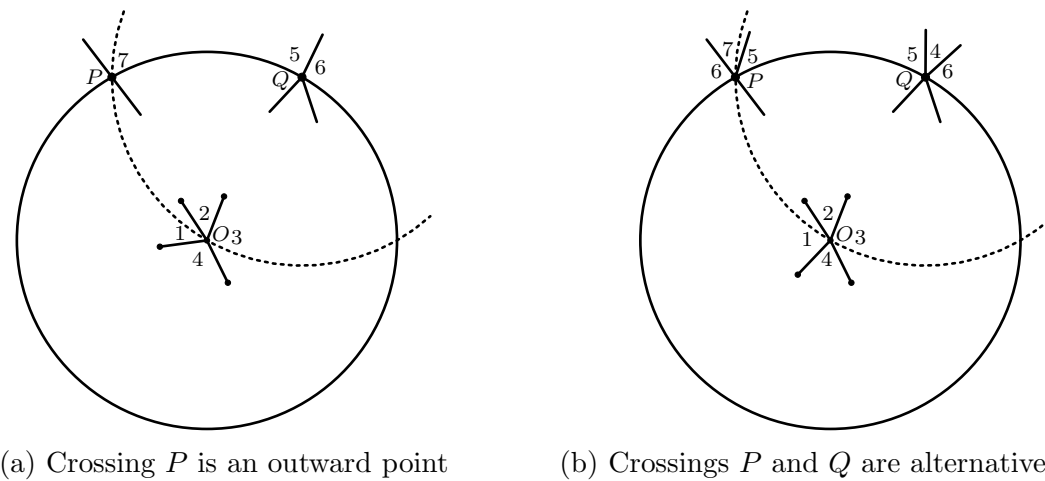

\section{Proof of Theorem \ref{thm:genthm}}

Let $\alpha$ be a polygon coloring of the plane and suppose it contains a vertex $O$ of degree 5 or more. By Lemma \ref{lem:arclem} there is at least one crossing $P$ on the unit circle $C_O$ centered at $O$. The regions that $C_O$ passes through which border $P$ cannot be colored any five colors that occur at $O$ by Lemma \ref{lem:neighlem}. Therefore, by our definition, the bordering regions must be colored using a sixth and seventh color. 

Hence, we consider the case where all vertices in $\alpha$ have degree at most $4$, and $\alpha$ contains a vertex $O$ of degree 4. Let $P$ be a crossing on $C_O$ and $Q$ be a point at distance one clockwise from $P$ on $C_O$. If $Q$ is not a crossing, then by Lemma \ref{lem:neighlem} the region $Q$ lies in or borders must be colored a seventh color. Therefore, we can assume that $Q$ is a crossing. Similarly, if we repeat the same reasoning clockwise starting from point $Q$, we can assume that all points on the inscribed hexagon on $C_O$ that contains $P$ and $Q$ are crossings.

If any of the vertices on the hexagon are inward or outward points, then by Lemma \ref{lem:inoutlem} at least seven colors are needed for $\alpha$. Hence, we can assume all vertices on the hexagon are alternative points. By Lemma \ref{lem:fouralt}, there are at most four alternative arcs on $C_O$. Therefore, by the pigeonhole principle, there must be two adjacent points on the inscribed hexagon that lie on the same alternative arc, so by Lemma \ref{lem:altlem} more than six colors are needed for $\alpha$.

Hence, if a polygon coloring has at least one vertex of degree at least 4, then at least seven colors are needed. 

\section{Triangle colorings}

In this section, we introduce an observation about triangle colorings that proves that Theorem \ref{thm:trithm} is a consequence of Theorem \ref{thm:genthm}. First, we define a type of line segment in triangle colorings.

\begin{definition}
A \textit{borderline} is a closed line segment consisting of borders and vertices. We call a borderline with vertex endpoints $A$ and $B$ borderline $AB$. 
\end{definition}

We now state a lemma regarding triangle colorings.

\begin{lemma}\label{lem:obtuselem}
For three borderlines $AB$, $BC$, and $CD$ such that rays $BA$ and $CD$ do not intersect and angles $\angle ABC$ and $\angle BCD$ are each less than $\pi$, there must be a vertex of degree at least 4 on borderline $BC$.
\end{lemma}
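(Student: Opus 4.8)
The plan is to argue by contradiction: suppose every vertex lying on borderline $BC$ has degree at most $3$ (in a triangle coloring every vertex has degree at least $3$, so these degrees are in fact all equal to $3$). Choose coordinates so that line $BC$ is horizontal with $B$ to the left of $C$; since $\angle ABC\in(0,\pi)$ we may assume, after reflecting in line $BC$ if necessary, that $A$ lies above line $BC$. The main objects are the triangular regions $T_1,\dots,T_p$ lying directly above the open segment $BC$, listed from left to right, where $T_j$ has a base edge $[Q_{j-1},Q_j]$ on line $BC$ with $Q_0=B$ and $Q_p=C$; let $\lambda_j,\rho_j,\alpha_j$ be the angles of $T_j$ at $Q_{j-1}$, at $Q_j$, and at its apex, so that $\lambda_j+\rho_j+\alpha_j=\pi$. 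Let $T_1',\dots,T_q'$ be the analogous triangles directly below $BC$, with angles $\lambda_j',\rho_j',\alpha_j'$. Throughout I will use the elementary fact that, in a triangle coloring, at any boundary point of a region the region occupies an angular sector of measure at most $\pi$ (the point is either a corner, where the angle is $<\pi$, or an interior point of an edge).

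First I would establish $\lambda_1+\rho_p<\pi$. At an interior junction $Q_j$ (with $1\le j\le p-1$), a ``fan'' of extra triangular regions in the sector between $T_j$ and $T_{j+1}$ would already force $\deg Q_j\ge4$; since $\deg Q_j\le3$, no fan exists, so $T_j$ and $T_{j+1}$ meet edge to edge and their angles at $Q_j$ fill the upper half-disk: $\rho_j+\lambda_{j+1}=\pi$. Summing the identities $\lambda_j+\rho_j+\alpha_j=\pi$ over $j$ and subtracting the identities $\rho_j+\lambda_{j+1}=\pi$ telescopes to $\sum_j\alpha_j=\pi-\lambda_1-\rho_p$; as every $\alpha_j>0$ this gives $\lambda_1+\rho_p<\pi$, and symmetrically $\lambda_1'+\rho_q'<\pi$.

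Next comes the local analysis at $B$ and at $C$. At $B$ the borders present include the border of borderline $BC$ pointing towards $C$, the first border of borderline $AB$ (making angle $\angle ABC<\pi$ with the former, on the side of $A$), and the non-base edge of $T_1$, which makes angle $\lambda_1$ with the base and satisfies $\lambda_1\le\angle ABC$ because $T_1$ cannot straddle borderline $AB$. If $\lambda_1<\angle ABC$ these are three distinct borders all in the closed half-plane on the side of $A$, leaving a sector of measure $2\pi-\angle ABC>\pi$, which forces a further border and hence $\deg B\ge4$, a contradiction. So $\lambda_1=\angle ABC$; then $\deg B\le3$ forces exactly one more border at $B$, dividing the remaining sector into the triangle on the far side of borderline $AB$ and the triangle $T_1'$, and since both have angle at most $\pi$ at $B$ one extracts $\lambda_1'\ge\pi-\angle ABC$, i.e. $\lambda_1+\lambda_1'\ge\pi$. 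The same analysis at $C$, with a branch on which side of line $BC$ contains $D$, yields $\rho_p+\rho_q'\ge\pi$. Adding these to $\lambda_1+\rho_p<\pi$ and $\lambda_1'+\rho_q'<\pi$ gives $2\pi\le\lambda_1+\rho_p+\lambda_1'+\rho_q'<2\pi$, the desired contradiction. (When $A$ and $D$ lie on the same side of line $BC$ the argument shortens: borderline $CD$ then also caps the upper chain, forcing $\rho_p=\angle BCD$ just as $\lambda_1=\angle ABC$, so $\lambda_1+\rho_p<\pi$ becomes $\angle ABC+\angle BCD<\pi$, which contradicts the hypothesis that rays $BA$ and $CD$ do not meet, since for a chain on one side non-intersection of these rays is exactly $\angle ABC+\angle BCD\ge\pi$.)

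I expect the delicate part to be this last local bookkeeping: showing cleanly that the edge of $T_1$ at $B$ must coincide with borderline $AB$, that the ``leftover'' angular sectors at $B$ and at $C$ are each occupied by a single triangular region (this is precisely where the degree hypothesis is spent), handling the subcases for the side of $D$, and accounting for the possibility that one of the chains $T_1,\dots,T_p$ or $T_1',\dots,T_q'$ overruns the endpoint $B$ or $C$ on whichever side is not capped by borderline $AB$ or borderline $CD$.
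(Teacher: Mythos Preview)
Your approach differs substantially from the paper's. The paper runs an inward iteration: the triangle filling $\angle ABC$ has a vertex $E$ on $BC$, the triangle filling $\angle BCD$ has a vertex $F$, and one repeats this from $E$ and $F$ towards the middle until, by local finiteness, two consecutive vertices $P,Q$ on $BC$ are reached, at which point one of them is forced to have degree $\ge 4$. You replace this by a global angle-count: telescope the interior angles of the upper and lower chains of triangles to get $\lambda_1+\rho_p<\pi$ and $\lambda_1'+\rho_q'<\pi$, then harvest $\lambda_1+\lambda_1'\ge\pi$ and $\rho_p+\rho_q'\ge\pi$ from the degree-$3$ structure at $B$ and at $C$, and sum. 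Your same-side branch is clean and exposes explicitly the obstruction the paper's iteration discovers one triangle at a time (it collapses to $\angle ABC+\angle BCD<\pi$, against the non-intersection hypothesis).

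The opposite-side overrun you flag at the end is a genuine gap, however, not just bookkeeping. If $D$ lies below line $BC$ and the upper chain overruns $C$ (so $Q_p>C$ and $C$ sits mid-edge on $T_p$), then $\rho_p$ is measured at $Q_p$, not at $C$; the degree-$3$ structure at $C$ then tells you only that the three borders there are $CB$, $CD$, and the continuation of line $BC$ past $C$, and gives no lower bound on $\rho_p$. Without $\rho_p+\rho_q'\ge\pi$ your four inequalities no longer close to a contradiction: one can perfectly well have $\lambda_1+\lambda_1'\ge\pi$ together with $\lambda_1+\rho_p<\pi$ and $\lambda_1'+\rho_q'<\pi$. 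To push your method through this case you would need a further idea --- for instance, extending both chains past $C$ until a second capping configuration appears, or falling back on the paper's local iterative step at the overrun vertex. The paper's short inward iteration sidesteps this because it never has to compare the two endpoints; each step only needs the next triangle to drop a vertex into the remaining subinterval, and termination comes from local finiteness rather than from an inequality between angles at $B$ and at $C$.
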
 
\begin{proof}
We know that angle $\angle ABC$ lies within a triangle because otherwise vertex $B$ would have degree at least 4 or there would be an angle greater than $\pi$, which is not possible in a triangle coloring. Because $\angle ABC$ lies within a triangle with a vertex $E$ on the borderline $BC$, vertex $E$ can either be at $C$ or in between $B$ and $C$. If vertex $E$ lies at $C$, then $C$ has degree at least 4 because otherwise there would be an angle greater than $\pi$, which is not possible in a triangle coloring, or the ray $CD$ would intersect ray $BA$. Thus, we assume vertex $E$ lies in between $B$ and $C$. By a similar argument it follows that $\angle BCD$ lies within a triangle with a vertex $F$ on borderline $EC$. If this vertex lies at $E$, then $E$ has degree at least 4, so we assume $F$ lies in between $E$ and $C$. We can repeat the same reasoning for angles within triangles at $E$ and $F$ and so on, until two vertices $P$ and $Q$ are found on borderline $BC$ that do not have any vertices between them. Such a pair exists because the coloring is locally finite. Either $Q$ has degree at least 4, or there is a triangle containing the angle at $Q$, and then $P$ has degree at least 4.\end{proof}

\begin{figure}[ht]
        \centering
\begin{tikzpicture}[line cap=round,line join=round,>=triangle 45,x=1.0cm,y=1.0cm, scale=1.2]
\draw [line width=1.pt] (-4.,2.)-- (-2.,0.);
\draw [line width=1.pt] (-2.,0.)-- (2.,0.);
\draw [line width=1.pt] (2.,0.)-- (4.,3.);
\draw [line width=1.pt] (-4.,2.)-- (-1.,0.);
\draw [line width=1.pt] (4.,3.)-- (1.,0.);
\draw [line width=1.pt] (-2.5046153846153847,1.003076923076923)-- (-0.5,0.);
\draw [line width=1.pt] (0.5,0.)-- (3.,2.);
\draw [line width=1.pt] (-0.5,0.)-- (1.8853658536585365,1.1082926829268291);
\begin{scriptsize}
\draw [fill=uuuuuu] (-4.,2.) circle (1pt);
\draw[color=uuuuuu] (-3.86,2.2) node {$A$};
\draw [fill=uuuuuu] (-2.,0.) circle (1pt);
\draw[color=uuuuuu] (-1.86,0.17) node {$B$};
\draw [fill=uuuuuu] (2.,0.) circle (1pt);
\draw[color=uuuuuu] (1.9,0.25) node {$C$};
\draw [fill=uuuuuu] (4.,3.) circle (1pt);
\draw[color=uuuuuu] (4.14,3.2) node {$D$};
\draw [fill=uuuuuu] (-1.,0.) circle (1pt);
\draw[color=uuuuuu] (-1.10,-0.21) node {$E$};
\draw [fill=uuuuuu] (1.,0.) circle (1pt);
\draw[color=uuuuuu] (1.12,-0.21) node {$F$};
\draw [fill=uuuuuu] (-0.5,0.) circle (1pt);
\draw[color=uuuuuu] (-0.54,0.27) node {$P$};
\draw [fill=uuuuuu] (0.5,0.) circle (1pt);
\draw[color=uuuuuu] (0.36,-0.21) node {$Q$};
\end{scriptsize}
\end{tikzpicture}
\caption{This is an example in which vertex $P$ has degree 4.}\label{fig:obtusefig}
\end{figure}
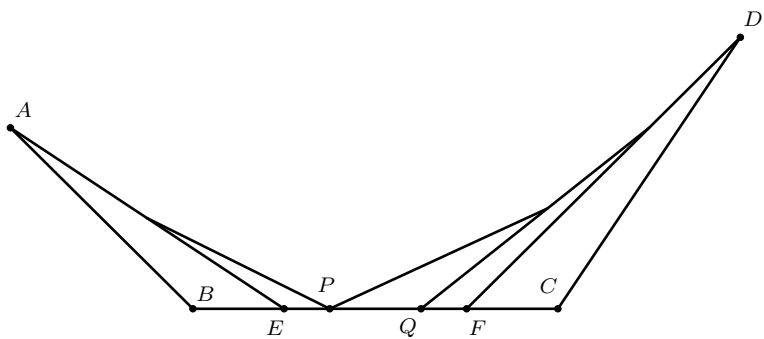

\begin{proposition}
Any triangle coloring has a vertex of degree 4 or more.
\end{proposition}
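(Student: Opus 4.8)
The plan is to deduce the proposition from Lemma~\ref{lem:obtuselem}. I will exhibit, inside an arbitrary triangle coloring, three borderlines $AB$, $BC$, $CD$ with $\angle ABC<\pi$, $\angle BCD<\pi$, and rays $BA$ and $CD$ disjoint; Lemma~\ref{lem:obtuselem} then immediately yields a vertex of degree at least $4$ on $BC$, which is exactly what we want. I would first record the easy fact that every vertex of a triangle coloring has degree at least $3$: a vertex of degree $2$ has exactly two incident regions, and their angles at the vertex sum to $2\pi$, so one is at least $\pi$, which a triangle cannot have at a corner (degree less than $2$ is excluded by the definition of a vertex). Consequently, the moment I meet a vertex of degree at least $4$ I am finished, and otherwise I may assume that every vertex under discussion has degree exactly $3$.

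To build the configuration I would start from a single region. Choose any region $T$; it is a triangle with corners $X$, $Y$, $Z$, each of which is a vertex. Assuming $Y$ and $Z$ both have degree $3$ (otherwise we are done), the three borders at $Y$ run toward $X$, toward $Z$, and in some third direction; let $A$ be the endpoint other than $Y$ of the borderline leaving $Y$ in that third direction. Let $D$ be the endpoint other than $Z$ of the borderline containing the side $ZX$ of $T$, on the side of $Z$ lying toward $X$, and let $BC$ be the side $YZ$ of $T$. Three things then require verification. (i) $\angle AYZ$ and $\angle YZD=\angle YZX$ are interior angles of triangular regions at $Y$ and at $Z$, hence are less than $\pi$. (ii) The ray $YA$ points strictly into the open half-plane cut off by the line $YZ$ that does not contain $X$, while the ray $ZD$ runs from $Z$ in the direction of $X$ and so points strictly into the half-plane that does contain $X$; since the two rays lie in opposite closed half-planes of line $YZ$ and meet that line only at $Y$ and at $Z$ respectively, they cannot intersect. (iii) The side $YZ$ is genuinely a maximal borderline with endpoints $Y$ and $Z$, and the borderlines producing $A$ and $D$ genuinely end at $Y$ and at $Z$. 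Granted (i)--(iii), Lemma~\ref{lem:obtuselem} applies to $AY$, $YZ$, $ZD$ and delivers a vertex of degree at least $4$ on $YZ$.

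The main obstacle is the bookkeeping behind (iii): it fails exactly when two borders incident to $Y$ or to $Z$ happen to be collinear, in which case one of the three regions at that vertex has a straight angle there, the relevant borderline runs \emph{through} the vertex instead of ending at it, and the third direction at $Y$ may fail to lie on the far side of line $YZ$. To cope with this I would exploit the fact that at a degree-$3$ vertex there is at most one collinear pair of incident borders, and reroute the chain accordingly: for example, when the borderline through side $YZ$ overshoots $Y$, I would slide the chain outward along that borderline, or I would replace $T$ by a neighbouring triangle chosen so that the two vertices playing the roles of $B$ and $C$ carry no collinear pair. This amounts to a short but somewhat tedious case check; the geometric core of the argument --- and the only nontrivial hypothesis that Lemma~\ref{lem:obtuselem} needs supplied --- is the opposite-half-plane observation in (ii).
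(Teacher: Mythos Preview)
The central step of your plan---applying Lemma~\ref{lem:obtuselem} to the chain $AY$, $YZ$, $ZD$---does not go through, because that lemma (both as proved in the paper and as a true statement) only covers the ``bowl'' configuration in which $A$ and $D$ lie on the \emph{same} side of the line through $BC$; your construction deliberately places them on opposite sides. Your observation in (ii) is in fact correct: if $Y$ has degree~$3$, then the two exterior regions at $Y$ are triangles with angles less than $\pi$ there, which forces the third border at $Y$ to point strictly into the half-plane not containing $X$. But then ray $YA$ lies below line $YZ$ while ray $ZD$ (toward $X$) lies above it, and this zigzag shape is exactly what the lemma cannot handle.

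To see that the lemma genuinely fails in this situation, place the region $T=XYZ$ above the segment $YZ$ and a single triangular region $YZV$ immediately below it, with $V$ in the lower half-plane. Then $Y$ and $Z$ each have degree~$3$, there are no vertices strictly between $Y$ and $Z$, and the borderlines $VY$, $YZ$, $ZX$ satisfy the literal hypotheses of Lemma~\ref{lem:obtuselem} (both angles are interior angles of triangles, and the rays $YV$ and $ZX$ sit in opposite open half-planes and are therefore disjoint)---yet there is no vertex of degree $\ge 4$ on $YZ$. The step of the lemma's proof that collapses is the assertion that the triangle containing $\angle BCD$ has a vertex $F$ on the sub-borderline $EC$: in the zigzag case that triangle is $T$ itself, whose vertex on line $YZ$ is $Y=B$, lying outside $EC$. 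The paper's proof of the proposition does substantially more preparatory work precisely in order to manufacture a genuine bowl configuration before the lemma is invoked: it first argues that the three outward borderlines at the corners of $T$ must each be collinear with a distinct side of $T$, then passes to an adjacent triangle $\triangle PAQ$ lying beyond one extended side, and only at that stage locates borderlines $XP$, $PQ$, $QY$ with $X$ and $Y$ on the same side of $PQ$. Your ``short but somewhat tedious case check'' in the final paragraph would in effect have to reproduce all of this, and it is not the bookkeeping around collinearity in (iii) that is the obstacle---it is the same-side requirement hidden in the lemma.
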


\begin{proof}
Assume there is no vertex of degree at least 4. Suppose we have a triangle $\triangle ABC$ enclosed by borderlines $AB$, $BC$, and $AC$. All vertices in the coloring must connected to at least three borderlines. This is because otherwise at each vertex there would be an angle greater than $\pi$, which is not possible in a triangle coloring, or the vertex would have degree at least 4, in which case we are done. We call $P$, $Q$, $R$ the vertices that are connected by borderlines to vertices $A$, $B$, $C$ respectively. Borderline $PA$ must lie on or between the lines that extend borderlines $AC$ and $AB$. This is because otherwise $PA$ would form an angle greater than $\pi$ with either $AC$ or $AB$, which is not possible in a triangle coloring. Likewise, we know that borderline $QB$ lies on or between lines extending borderlines $AB$ and $BC$ and that borderline $RC$ lies on or between lines extending borderlines $AC$ and $BC$.

We know that $PA$, $QB$, and $RC$ are each collinear with a borderline enclosing triangle $\triangle ABC$ because otherwise we can apply Lemma \ref{lem:obtuselem} to one borderline of $PA$, $QB$, and $RC$, a borderline enclosing $\triangle ABC$, and a different borderline of $PA$, $QB$, and $RC$, which forces a vertex of degree at least 4. If $PA$ and $QB$ are both collinear to $AB$, then we can apply Lemma \ref{lem:obtuselem} to borderlines $PA$, $AC$, and $RC$ or borderlines $QB$, $BC$, and $RC$, which forces a vertex of degree at least 4 on either borderline $AC$ or $BC$. The same reasoning can be applied to any pairs of $PA$, $QB$, and $RC$, so we know that no two of these borderlines are collinear with the same borderline enclosing $\triangle ABC$. This implies that borderlines $PA$, $QB$, $RC$ must each be collinear with a different borderline enclosing triangle $\triangle ABC$. Without loss of generality, suppose $PA$ is collinear with $AC$, $QB$ is collinear with $AB$, and $RC$ is collinear with $BC$ as shown in Figure \ref{fig:proptri}.

\begin{figure}[ht]
        \centering
\begin{tikzpicture}[line cap=round,line join=round,>=triangle 45,x=1.0cm,y=1.0cm, scale=1.1]
\draw [line width=1.pt] (2.,1.)-- (-2.,0.);
\draw [line width=1.pt] (-2.,0.)-- (0.,-2.);
\draw [line width=1.pt] (2.,1.)-- (0.,-2.);
\draw [line width=1.pt] (-2.753085958930627,0.7530859589306269)-- (2.9835294117647058,1.2458823529411764);
\draw [line width=1.pt] (0.,-2.) -- (-0.6923076923076921,-3.0384615384615383);
\draw [line width=1.pt] (-2.753085958930627,0.7530859589306269) -- (-3.93,1.93);
\draw [line width=1.pt] (2.9835294117647058,1.2458823529411764) -- (4.8,1.7);
\draw [line width=1.pt] (-2.753085958930627,0.7530859589306269)-- (-2.,0.);
\draw [line width=1.pt] (2.,1.)-- (2.9835294117647058,1.2458823529411764);
\begin{scriptsize}
\draw [fill=black] (2.,1.) circle (1pt);
\draw[color=black] (2.1,0.77) node {$B$};
\draw [fill=black] (-2.,0.) circle (1pt);
\draw[color=black] (-2.02,0.33) node {$A$};
\draw [fill=black] (0.,-2.) circle (1pt);
\draw[color=black] (0.18,-2.17) node {$C$};
\draw [fill=black] (-2.753085958930627,0.7530859589306269) circle (1pt);
\draw[color=black] (-2.7,0.99) node {$P$};
\draw [fill=black] (2.9835294117647058,1.2458823529411764) circle (1pt);
\draw[color=black] (2.9,1.49) node {$Q$};
\draw [fill=black] (-3.3,1.3) circle (1pt);
\draw[color=black] (-3.16,1.57) node {$X$};
\draw [fill=black] (3.7788235294117647,1.4447058823529408) circle (1pt);
\draw[color=black] (3.72,1.67) node {$Y$};
\draw [fill=black] (-0.36615384615384616,-2.5492307692307694) circle (1pt);
\draw[color=black] (-0.60,-2.35) node {$R$};
\end{scriptsize}
\end{tikzpicture}
\caption{This is an example of $\triangle ABC$.}\label{fig:proptri}
\end{figure}
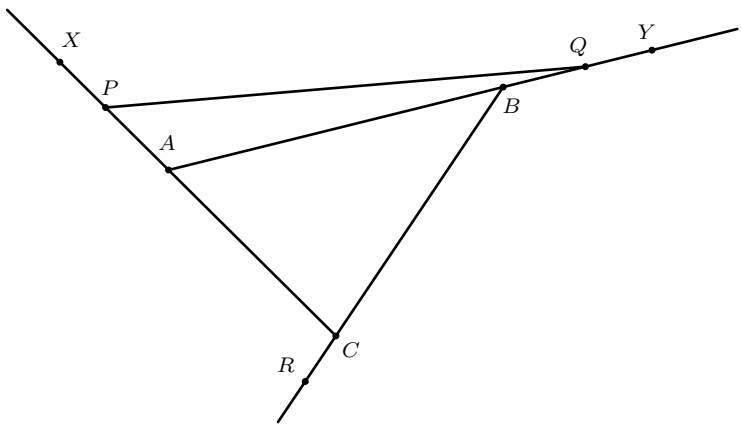

Vertex $P$ must be connected to two vertices besides $A$. Call them $X$ and $W$. Suppose neither borderline $XP$ nor $WP$ are collinear to $AC$. We can then apply Lemma \ref{lem:obtuselem} to $XP$, $PC$, and $RC$ or $XP$, $PA$, and $AB$ which forces a vertex of degree at least 4. The only exceptions to this are when $PX$ is collinear with $AC$, in which case we are done, or ray $PX$ intersects either ray $CR$ or ray $AB$. Thus, we consider the latter case. If both rays $PX$ and $PW$ intersect ray $CR$ or ray $AB$, then there would be an angle greater than $\pi$, which is not possible in a triangle coloring. Therefore, without loss of generality, we suppose ray $PX$ intersects ray $CR$ and $PW$ intersects ray $AB$. However, if this is the case, then $\angle XPW$ is greater than $\pi$, which is not possible in a triangle coloring. Hence, $P$ must be connected to a borderline collinear with $AC$. Hence, we suppose that $XP$ is collinear with $AC$. By a similar argument, we suppose a vertex $Y$ connected to $Q$ forms a borderline $YQ$ collinear with $AB$.

We know that $\angle PAB$ must lie within a triangle, which has vertices on the rays $CA$ and $AB$. Hence, the vertices of this triangle must form borderlines with $A$ that are each respectively collinear with $AC$ and $AB$. There must be borderlines connected to the vertices of this triangle collinear to $AC$ and $AB$ for the same reasoning applied to $XP$ and $YQ$. Thus, without loss of generality, suppose this triangle is $\triangle PAQ$. Applying Lemma \ref{lem:obtuselem} on borderlines $XP$, $PQ$, and $YQ$ forces a vertex of degree at least 4 on borderline $PQ$. Thus, any triangle coloring of the plane contains at least one vertex of degree at least 4. \end{proof}

Hence, by Theorem \ref{thm:genthm}, at least $7$ colors are needed to color the plane for any triangle coloring, which proves Theorem \ref{thm:trithm}.

\section{Discussion and open problems}

Since Theorem \ref{thm:genthm} requires the polygon coloring to have a vertex with degree at least 4, the only possible colorings that are left to be examined are those in which every vertex has degree at most 3. This type of coloring can not be analyzed using the same method. For instance, let $O$ be a vertex of degree 3 and $C_O$ be the unit circle centered at $O$. By Lemma \ref{lem:arclem} there is at least one crossing on $C_O$. Let $P$ be the crossing and $Q$ be a point one unit away on $C_O$. In the proof of Theorem  \ref{thm:genthm},  we can assume $Q$ is a crossing because otherwise seven colors are needed immediately. However, in a polygon coloring in which every vertex has degree at most 3, $Q$ does not have to be a crossing. This is because if $Q$ lies in a region, then it does not immediately imply that seven colors needed. Therefore, each crossing on $C_O$ can be a unit apart from pseudo-crossings and points within regions.

This leads us to ask for the fewest number of colors needed for polygon colorings in which every vertex has degree at most 3 (see Figure \ref{fig:tilings} for examples). If it is shown that at least seven colors are needed for this case, then the chromatic number of polygon colorings is 7 as a consequence of Theorem \ref{thm:genthm} and that would improve on Coulson's bound.

\begin{question}
What is the minimum number of colors needed for polygon colorings in which every vertex has degree at most 3?
\end{question}

Though Theorem \ref{thm:trithm} shows that triangle colorings require at least seven colors, this does not imply that the chromatic number of these colorings is $7$. The best coloring that we have found for triangle colorings (see Figure \ref{fig:besttri}) sets the upper bound for the chromatic number of triangle colorings to $8$. 

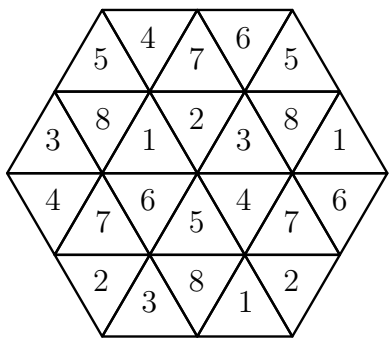
\begin{figure}[ht]
        \centering
                \begin{tikzpicture}[line cap=round,line join=round,>=triangle 45,x=1.0cm,y=1.0cm, scale=0.5]
\draw[line width=.9pt] (-1.25,2.165063509461097) -- (1.25,2.165063509461097) -- (0,4.330127018922194) -- cycle;
\draw[line width=.9pt] (1.25,2.165063509461097) -- (2.5,0) -- (3.75,2.1650635094610955) -- cycle;
\draw[line width=.9pt] (2.5,0) -- (1.25,-2.165063509461097) -- (3.75,-2.165063509461098) -- cycle;
\draw[line width=.9pt] (1.25,-2.165063509461097) -- (-1.25,-2.165063509461097) -- (0,-4.330127018922194) -- cycle;
\draw[line width=.9pt] (-1.25,-2.165063509461097) -- (-2.5,0) -- (-3.75,-2.1650635094610955) -- cycle;
\draw[line width=.9pt] (-2.5,0) -- (-1.25,2.165063509461097) -- (-3.75,2.165063509461098) -- cycle;
\draw[line width=.9pt] (-3.75,2.165063509461098) -- (-1.25,2.165063509461097) -- (-2.5,4.3301270189221945) -- cycle;
\draw[line width=.9pt] (1.25,2.165063509461097) -- (3.75,2.1650635094610955) -- (2.5,4.330127018922194) -- cycle;
\draw[line width=.9pt] (3.75,-2.165063509461098) -- (1.25,-2.165063509461097) -- (2.5,-4.3301270189221945) -- cycle;
\draw[line width=.9pt] (-1.25,-2.165063509461097) -- (-3.75,-2.1650635094610955) -- (-2.5,-4.330127018922194) -- cycle;
\draw[line width=.9pt] (0,4.330127018922194) -- (-2.5,4.3301270189221945) -- (-1.25,2.165063509461099) -- cycle;
\draw[line width=.9pt] (2.5,4.330127018922194) -- (0,4.330127018922194) -- (1.25,2.1650635094610955) -- cycle;
\draw[line width=.9pt] (0,-4.330127018922194) -- (2.5,-4.3301270189221945) -- (1.25,-2.165063509461099) -- cycle;
\draw[line width=.9pt] (-2.5,-4.330127018922194) -- (0,-4.330127018922194) -- (-1.25,-2.1650635094610955) -- cycle;
\draw[line width=.9pt] (2.5,0) -- (3.75,-2.165063509461098) -- (5,0) -- cycle;
\draw[line width=.9pt] (2.5,0) -- (5,0) -- (3.75,2.1650635094610973) -- cycle;
\draw[line width=.9pt] (-2.5,0) -- (-3.75,2.165063509461098) -- (-5,0) -- cycle;
\draw[line width=.9pt] (-2.5,0) -- (-5,0) -- (-3.75,-2.1650635094610973) -- cycle;
\draw[line width=.9pt] (-1.25,2.165063509461097) -- (0,0) -- (1.25,2.165063509461097) -- cycle;
\draw[line width=.9pt] (-2.5,0) -- (0,0) -- (-1.25,2.165063509461097) -- cycle;
\draw[line width=.9pt] (-2.5,0) -- (-1.25,-2.165063509461097) -- (0,0) -- cycle;
\draw[line width=.9pt] (0,0) -- (2.5,0) -- (1.25,2.165063509461097) -- cycle;
\draw[line width=.9pt] (0,0) -- (1.25,-2.165063509461097) -- (2.5,0) -- cycle;
\draw[line width=.9pt] (-1.25,-2.165063509461097) -- (1.25,-2.165063509461097) -- (0,0) -- cycle;

\draw (-1.7575, 1.4500635095) node[anchor=north west] {1};
\draw (-.515, 1.9950635095) node[anchor=north west] {2};
\draw (0.7275, 1.4500635095) node[anchor=north west] {3};

\draw (-1.7875, -.1500635095) node[anchor=north west] {6};
\draw (-.515, -.7200635095) node[anchor=north west] {5};
\draw (0.7275, -.1500635095) node[anchor=north west] {4};

\draw (1.9805, -.7200635095) node[anchor=north west] {7};
\draw (1.9805, 1.9950635095) node[anchor=north west] {8};

\draw (-2.97525, -.7200635095) node[anchor=north west] {7};
\draw (-2.97525, 1.9950635095) node[anchor=north west] {8};

\draw (-3.025, 3.5950635095) node[anchor=north west] {5};
\draw (-.515, 3.5950635095) node[anchor=north west] {7};
\draw (1.9805, 3.5950635095) node[anchor=north west] {5};

\draw (-3.025, -2.2950635095) node[anchor=north west] {2};
\draw (-.515, -2.2950635095) node[anchor=north west] {8};
\draw (1.9805, -2.2950635095) node[anchor=north west] {2};

\draw (-1.7575, -2.850635095) node[anchor=north west] {3};
\draw (0.7775, -2.850635095) node[anchor=north west] {1};

\draw (-1.7875, 4.150635095) node[anchor=north west] {4};
\draw (0.71475, 4.150635095) node[anchor=north west] {6};

\draw (-4.28775, 1.4500635095) node[anchor=north west] {3};
\draw (-4.28775, -.1500635095) node[anchor=north west] {4};

\draw (3.252775, 1.4500635095) node[anchor=north west] {1};
\draw (3.252775, -.1500635095) node[anchor=north west] {6};

\end{tikzpicture}
\caption{Each triangle has unit side length and every vertex and border point is given the color of the region directly above it.}\label{fig:besttri}
\end{figure}

Since this coloring uses more than seven colors, naturally one asks what the chromatic number is for triangle colorings. We conjecture that the chromatic number for such colorings is $8$.

\begin{question}
What is the chromatic number of triangle colorings of the plane?
\end{question}

Finally, we note that it may be possible to extend our approach to polygon-like colorings that use differentiable curves instead of line segments. In such a coloring, we can approximate the curved borders connected at vertices by the tangent lines of those curves, which allows us to use most of the arguments for colorings with straight line borders. It would thus follow that a coloring with regions that are curved triangles also requires seven colors. However, one complication that would have to be dealt with is that some borders could partially coincide with circle arcs.

\section*{Acknowledgements}
I am incredibly grateful for the guidance Frank de Zeeuw has given me throughout the process of developing this paper. Also, I would like to thank Adam Sheffer and Dan Stefanica for giving me the opportunity to work on this project and an anonymous reviewer for pointing out an error in an earlier version of Section 5. All the diagrams in this paper have been made with GeoGebra.

\end{document}